\documentclass{amsart}
\usepackage{booktabs}
\usepackage{fullpage}
\usepackage{tristan}
\usepackage{algorithm}
\usepackage{algpseudocode}
\usepackage{todonotes}
\usepackage{placeins}

\usepackage{pgfplots}
\usepackage{pgfplotstable}
\usepackage{subcaption}
\usepackage{tikz}
\usepackage[
  giveninits=true,
  eprint=false,
  url=false,
  style=alphabetic,
  maxbibnames=99
]{biblatex}

\author[1,2]{Tristan Pryer}
  
\address{$^1$ Institute for Mathematical Innovation\\ University of
  Bath, Bath, UK. $^2$ Department of Mathematical Sciences
  \\ University of Bath, Bath, UK.}

\begin{document}

\newcommand{\spdeout}{./figures/}

\title{Polytopic symmetric interior-penalty approximation of elliptic
  SPDEs driven by spatial white noise }

\begin{abstract}
We analyse symmetric interior-penalty discontinuous Galerkin
approximations of elliptic stochastic partial differential equations
driven by spatial Gaussian white noise on quasi-uniform polytopic
meshes.  The low regularity of the exact random field precludes the
usual SIP consistency argument.  We instead restrict the same
continuum white-noise functional canonically to the discontinuous
polynomial space and separate the error into stochastic
forcing-restriction and deterministic discretisation components.

For every fixed polynomial degree $k\geq1$, we prove the sharp
two-sided estimate
\[
  \Norm{Y-Y_h}_{L^2\qp{\Xi;L^2(\Omega)}}
  \simeq
  h^{2-d/2},
\]
giving rates $h$ in two dimensions and $h^{1/2}$ in three dimensions.
A finite-dimensional lower bound establishes optimality.  The analysis
allows polytopic elements with arbitrarily many and arbitrarily small
faces, including admissible non-convex elements.  In two dimensions we
also treat polygons with a single reentrant corner.  Although the
deterministic approximation exhibits an angle-dependent loss of
regularity, the singular resolvent contribution is finite rank, and
the sharp first-order stochastic rate is retained.

The discontinuous structure yields an exact element-local sampler for
the restricted white noise.  Numerical experiments on irregular
polygonal, non-convex polytopic and reentrant domains support the
predicted convergence behaviour.
x\end{abstract}

\maketitle

\section{Introduction}
\label{sec:intro}

Spatial Gaussian white noise is a canonical model for irregular random
forcing and a fundamental ingredient in stochastic partial
differential equation representations of Gaussian random fields.  The
connection between Mat\'ern fields and elliptic SPDEs goes back to
Whittle and Mat\'ern and underpins the now standard SPDE approach to
spatial statistics
\cite{Whittle1954Plane,Matern1986,
LindgrenRueLindstrom2011SPDE,LindgrenBolinRue2022SPDEReview}.

We consider the representative problem
\begin{equation}
  \qp{-\Delta+\kappa^2}Y
  =
  \mathcal W
  \qquad
  \text{in }\Omega,
  \qquad
  Y|_{\partial\Omega}=0,
  \label{eq:intro_spde}
\end{equation}
where $\mathcal W$ denotes spatial Gaussian white noise.  The aim is
to approximate $Y$ by the classical symmetric interior-penalty
discontinuous Galerkin method on general polytopic meshes, while
retaining the original continuum white noise rather than replacing it
by an independently regularised forcing.

Numerical approximation of SPDEs with additive or white-noise forcing
has a substantial history.  Early finite-element, finite-difference
and lattice analyses include
\cite{AllenNovoselZhang1998,DuZhang2002,GyongyMartinez2006}.
For conforming finite element methods, strong and weak convergence,
fractional elliptic SPDEs and finite-dimensional representations of
the driving noise have subsequently been studied in
\cite{CaoYangYin2007,
ZhangRozovskiiKarniadakis2016,
BolinKirchnerKovacs2018,
CaoHongLiu2020,
BolinKirchnerKovacs2020FractionalSPDE}.
In particular, sharp strong rates of order $h^{2-d/2}$ are known for
conforming approximations of elliptic problems driven by spatial white
noise
\cite{ZhangRozovskiiKarniadakis2016,
CaoHongLiu2020,
BolinKirchnerKovacs2020FractionalSPDE}.
Related operator and Hilbert--Schmidt formulations for
Whittle--Mat\'ern fields are developed in
\cite{CoxKirchner2020}.

The representation and coupling of the Gaussian forcing form a
separate computational question.  Mass-matrix constructions for finite
element white noise and multilevel couplings, including couplings
between non-nested meshes, are developed in
\cite{CrociGilesRognesFarrell2018,
FairbanksVillaVassilevski2021WhiteNoise,
CrociGilesFarrell2021MLQMC}.

The principal analytical difficulty in \eqref{eq:intro_spde} is the
low spatial regularity of its solution.  White noise has regularity
below $H^{-d/2}(\Omega)$, and the elliptic resolvent gives, in the
regular case,
\[
  Y
  \in
  L^2\qp{\Xi;H^{2-d/2-\varepsilon}(\Omega)}
  \qquad
  \forall\varepsilon>0.
\]
Thus, in both dimensions considered here, the exact field need not
belong to $H^1_0(\Omega)$ and the usual energy-space formulation is
not the natural starting point.  Instead, the SPDE is defined through
the elliptic resolvent acting on an isonormal Gaussian process
\cite{BolinKirchnerKovacs2018,
BolinKirchnerKovacs2020FractionalSPDE,
LasanenRoininenHuttunen2018}.

This lack of regularity is particularly relevant for an interior
penalty method.  Classical SIP consistency is usually expressed by
inserting the exact solution into a broken bilinear form containing
elementwise gradients and normal traces.  Such quantities need not be
defined for $Y$.  Rather than assigning a DG differential structure
to the rough random field, we avoid this step completely.  We first
restrict $\mathcal W$ to the discontinuous polynomial space and solve
the corresponding continuum elliptic problem.  This produces the
decomposition
\[
  Y-Y_h
  =
  \qp{Y-Y^{(h)}}
  +
  \qp{Y^{(h)}-Y_h},
\]
in which the first term measures only restriction of the stochastic
forcing, while the second is an ordinary deterministic SIP error with
an $L^2(\Omega)$ right-hand side.  The stochastic and deterministic
parts can therefore be analysed using the tools appropriate to each.

DG discretisations of white-noise-driven problems have previously been
considered in more structured settings.  Existing analyses include LDG
approximation of elliptic SPDEs in two and three dimensions with
piecewise-constant regularisations of white noise on Cartesian meshes,
where the stochastic error is controlled through spatial increments of
the Green function \cite{YaoBo2007DGSPDE}, as well as finite element
and DG approximations of white-noise-driven Helmholtz and scattering
problems \cite{CaoZhangZhang2008Helmholtz,cao2008finite}.  Here we use
the classical SIP method on general polytopic meshes, couple every
discretisation to the same continuum white-noise functional, and seek
sharp strong convergence rates without introducing a separate
regularisation of the driving noise.

Our first main result establishes the sharp strong convergence rate
\begin{equation}
  \Norm{Y-Y_h}_{L^2\qp{\Xi;L^2(\Omega)}}
  \simeq
  h^{2-d/2}
  \label{eq:intro_main_result}
\end{equation}
for fixed polynomial degree on quasi-uniform polytopic meshes. A
matching finite-dimensional lower bound shows that these rates are
optimal for approximation spaces of comparable dimension.

Our second main result extends the analysis in two dimensions to
polytopic domains with a single reentrant corner.  Although the
deterministic SIP approximation reflects the associated loss of
elliptic regularity, we prove that the stochastic approximation
retains the sharp first-order rate
\begin{equation}
  \Norm{
    Y-Y_h
  }_{L^2\qp{\Xi;L^2(\Omega)}}
  \simeq
  h.
  \label{eq:intro_reentrant_result}
\end{equation}
The proof exploits the finite-dimensional structure of the corner
singularity and shows that it does not alter the leading stochastic
approximation error.

The analysis is carried out on general polytopic meshes.  Elements are
uniformly star-shaped with respect to a ball in their kernel, but may
have arbitrarily many and arbitrarily small faces and may be
non-convex.  The resulting estimates are uniform with respect to the
number and relative sizes of the faces \cite{botti2025trace,
  CangianiGeorgoulisHouston2014,
  CangianiDongGeorgoulisHouston2016ADRPolytopic,
  CangianiDongGeorgoulis2022Arbitrary}.

A further computational consequence of the discontinuous formulation
is that the canonical restricted white-noise load can be sampled
exactly from independent element-local Gaussian vectors.  The same
construction gives exact couplings across agglomerated mesh
hierarchies and is also used in the multilevel Monte-Carlo study
\cite{ChronholmEvansGeorgoulisPryer2026MLMC}.

The numerical experiments examine each of these features.  Direct
continuum-error calculations on regular two- and three-dimensional
domains test the rates in \eqref{eq:intro_main_result} across several
polytopic mesh families.  On reentrant domains, a corner-angle sweep
tests the robustness of the first-order stochastic rate in
\eqref{eq:intro_reentrant_result} with respect to the strength of the
corner singularity.

The remainder of the paper is organised as follows.
Section~\ref{sec:problem} introduces the white-noise-driven elliptic
problem and its spectral representation.
Section~\ref{sec:polytopic_dg} develops the polytopic SIP
discretisation, the canonical white-noise restriction and the sharp
strong convergence theory.
Section~\ref{sec:reentrant_polygon} treats the reentrant-corner case.
Section~\ref{sec:numerics} presents the numerical experiments.

\section{Problem setup}
\label{sec:problem}

Let $\Omega\subset\mathbb R^d$, $d\in\qc{2,3}$, be a bounded polytopic
Lipschitz domain, and let $(\Xi,\mathcal F,\mathbb P)$ be a complete
probability space.  We write $L^p(\Xi;X)$ for the usual Bochner spaces
and denote expectation by $\mathbb E$.

For $\kappa>0$, define
\begin{equation}
  a(u,v)
  :=
  \qp{
    \nabla u,\nabla v
  }_{L^2(\Omega)}
  +
  \kappa^2
  \qp{
    u,v
  }_{L^2(\Omega)},
  \qquad
  u,v\in H^1_0(\Omega),
  \label{eq:continuous_bilinear_form}
\end{equation}
and let $\Lop$ be the positive self-adjoint operator in $L^2(\Omega)$
associated with $a$.  Thus, $\Lop$ is the Dirichlet realisation of
$-\Delta+\kappa^2$.  We consider the white-noise-driven problem
\begin{equation}
  \Lop Y
  =
  \mathcal W.
  \label{eq:spde}
\end{equation}

The principal analysis is developed under the following deterministic
regularity assumption.

\begin{assumption}[Elliptic regularity]
  \label{ass:elliptic_regular}
  The Dirichlet realisation of $\Lop$ satisfies
  \begin{equation}
    D(\Lop)
    =
    H^2(\Omega)\cap H^1_0(\Omega)
    \label{eq:operator_domain}
  \end{equation}
  and
  \begin{equation}
    \Norm{z}_{H^2(\Omega)}
    \le
    C_{\mathrm{reg}}
    \Norm{\Lop z}_{L^2(\Omega)}
    \qquad
    \forall z\in D(\Lop).
    \label{eq:elliptic_H2_regular}
  \end{equation}
\end{assumption}

Assumption~\ref{ass:elliptic_regular} holds, in particular, on bounded
convex polytopic domain.  The analysis uses
\eqref{eq:elliptic_H2_regular} directly rather than convexity.  Note
in \S \ref{sec:reentrant_polygon} we replace this assumption by an
explicit corner-singularity decomposition on reentrant polygons.

Since $\Omega$ is bounded, $\Lop^{-1}:L^2(\Omega)\to L^2(\Omega)$ is
compact.  Consequently, $\Lop$ admits an $L^2$-orthonormal
eigensystem $\qc{\lambda_j,\phi_j}_{j\ge1}$ satisfying
\begin{equation}
  \Lop\phi_j
  =
  \lambda_j\phi_j,
  \qquad
  \qp{
    \phi_i,\phi_j
  }_{L^2(\Omega)}
  =
  \delta_{ij}.
  \label{eq:elliptic_eigensystem}
\end{equation}
By Weyl's law, there exist constants $c_\lambda,C_\lambda>0$ such
that
\begin{equation}
  c_\lambda j^{2/d}
  \le
  \lambda_j
  \le
  C_\lambda j^{2/d},
  \qquad
  j\ge1.
  \label{eq:weyl}
\end{equation}
For $s\ge0$, let $\mathcal H^s_{\Lop}$ be the Hilbert space
\begin{equation}
  \mathcal H^s_{\Lop}
  :=
  \ensemble{
    v\in L^2(\Omega)
  }{
    \sum_{j=1}^\infty
    \lambda_j^s
    \norm{
      \qp{v,\phi_j}_{L^2(\Omega)}
    }^2
    <
    \infty
  },
  \label{eq:elliptic_hilbert_scale}
\end{equation}
equipped with the norm
\begin{equation}
  \Norm{v}_{\mathcal H^s_{\Lop}}^2
  :=
  \sum_{j=1}^\infty
  \lambda_j^s
  \norm{
    \qp{v,\phi_j}_{L^2(\Omega)}
  }^2.
  \label{eq:elliptic_hilbert_scale_norm}
\end{equation}
In particular, $\mathcal H^0_{\Lop}=L^2(\Omega)$ and $\mathcal
H^2_{\Lop}=D(\Lop)$ with equivalent norms.  Under
Assumption~\ref{ass:elliptic_regular}, interpolation yields
\begin{equation}
  \mathcal H^s_{\Lop}
  \hookrightarrow
  H^s(\Omega),
  \qquad
  0\le s\le2.
  \label{eq:elliptic_scale_sobolev_embedding}
\end{equation}

\subsection{Spatial white noise and the elliptic solution}
\label{subsec:white_noise_problem}

We regard $\mathcal W$ as an isonormal Gaussian process over
$L^2(\Omega)$. It is a centred Gaussian linear mapping $\mathcal
W:L^2(\Omega)\to L^2(\Xi)$ satisfying
\begin{equation}
  \mathbb E\qb{
    \mathcal W(v)\mathcal W(w)
  }
  =
  \qp{
    v,w
  }_{L^2(\Omega)}
  \qquad
  \forall v,w\in L^2(\Omega).
  \label{eq:white_noise_covariance}
\end{equation}
In particular,
\begin{equation}
  \xi_j
  :=
  \mathcal W(\phi_j),
  \qquad
  j\ge1,
  \label{eq:white_noise_coefficients}
\end{equation}
are independent standard Gaussian random variables.  The solution of
\eqref{eq:spde} is defined spectrally by
\begin{equation}
  Y
  :=
  \sum_{j=1}^\infty
  \lambda_j^{-1}\xi_j\phi_j.
  \label{eq:spde_spectral_solution}
\end{equation}

\begin{theorem}[Regularity of the white-noise solution]
  \label{thm:spde_regular}
  Let $d\in\qc{2,3}$ and suppose that the eigenvalues of $\Lop$
  satisfy \eqref{eq:weyl}.  Then the series
  \eqref{eq:spde_spectral_solution} converges in
  $L^2\qp{\Xi;L^2(\Omega)}$ and defines the unique
  $L^2(\Omega)$-valued solution of \eqref{eq:spde} in the resolvent
  sense.  Moreover,
  \begin{equation}
    Y
    \in
    L^2\qp{
      \Xi;
      \mathcal H^s_{\Lop}
    }
    \qquad
    \forall s<2-\frac d2,
    \label{eq:Y_elliptic_regular}
  \end{equation}
  whereas
  \begin{equation}
    \mathbb E\qb{
      \Norm{Y}_{\mathcal H^{2-d/2}_{\Lop}}^2
    }
    =
    \infty.
    \label{eq:Y_critical_divergence}
  \end{equation}

  If Assumption~\ref{ass:elliptic_regular} also holds, then
  \begin{equation}
    Y
    \in
    L^2\qp{
      \Xi;
      H^s(\Omega)
    }
    \qquad
    \forall\,0\le s<2-\frac d2.
    \label{eq:Y_regular}
  \end{equation}
  Thus the available Sobolev regularity is $H^{1-\varepsilon}$ in two
  dimensions and $H^{1/2-\varepsilon}$ in three dimensions.
\end{theorem}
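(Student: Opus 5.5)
The plan is to compute every quantity through the spectral expansion and independence of the $\xi_j$, and to reduce each claim to the convergence or divergence of a series of the form $\sum_j j^{-\beta}$ via Weyl's law \eqref{eq:weyl}.

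\emph{Convergence in $L^2(\Xi;L^2(\Omega))$.} Let $Y_N:=\sum_{j\le N}\lambda_j^{-1}\xi_j\phi_j$. By $L^2$-orthonormality of the $\phi_j$ and $\mathbb E[\xi_i\xi_j]=\delta_{ij}$ from \eqref{eq:white_noise_covariance}, for $M<N$ we have
\[
  \mathbb E\qb{\Norm{Y_N-Y_M}_{L^2(\Omega)}^2}
  =\sum_{j=M+1}^N\lambda_j^{-2}
  \le c_\lambda^{-2}\sum_{j>M} j^{-4/d}.
\]
Since $4/d>1$ for $d\in\qc{2,3}$, the tail tends to zero, so $(Y_N)$ is Cauchy in the Bochner space and converges. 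For the resolvent sense, I would interpret a solution as an $L^2(\Omega)$-valued $Y$ with $(Y,v)_{L^2}=\mathcal W(\Lop^{-1}v)$ a.s.\ for every $v\in L^2(\Omega)$, i.e.\ $Y=\Lop^{-1}\mathcal W$ weakly. Testing with $v=\phi_j$ gives $(Y,\phi_j)=\lambda_j^{-1}\xi_j$. This identifies all coefficients, hence uniqueness holds a.s., and the spectral series satisfies the identity by linearity and $L^2(\Xi)$-continuity of $\mathcal W$.

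\emph{Regularity and criticality.} By the same computation,
\[
  \mathbb E\qb{\Norm{Y}_{\mathcal H^s_{\Lop}}^2}=\sum_j\lambda_j^{s-2}\mathbb E[\xi_j^2]=\sum_j\lambda_j^{s-2},
\]
where monotone convergence justifies exchanging sum and expectation. By \eqref{eq:weyl} this is comparable to $\sum_j j^{2(s-2)/d}$, which is finite if and only if $2(2-s)/d>1$, i.e.\ $s<2-d/2$. At $s=2-d/2$ the exponent is exactly $-1$. The lower Weyl bound then gives a divergent harmonic series, which proves \eqref{eq:Y_critical_divergence}. When $s-2<0$, one uses $\lambda_j^{s-2}\le c_\lambda^{s-2}j^{2(s-2)/d}$; the case $s-2\ge0$ never arises because $s<2-d/2<2$.

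\emph{Sobolev regularity.} Under Assumption~\ref{ass:elliptic_regular}, the embedding \eqref{eq:elliptic_scale_sobolev_embedding} gives $\Norm{Y}_{H^s}\le C\Norm{Y}_{\mathcal H^s_{\Lop}}$ pathwise for $0\le s\le2$. Squaring and taking expectations yields \eqref{eq:Y_regular}, and specialising to $d=2,3$ gives the stated exponents. Measurability of $Y$ as an $H^s$-valued random variable follows because it is the $L^2(\Xi;\mathcal H^s_{\Lop})$-limit of the finite sums.

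The only delicate point is the precise meaning of ``resolvent sense'' and the accompanying uniqueness. The series estimates themselves are routine once Weyl's law is available.
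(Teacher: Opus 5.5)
Your proposal is correct and follows essentially the same route as the paper: compute $\mathbb E\qb{\Norm{Y}_{\mathcal H^s_{\Lop}}^2}=\sum_j\lambda_j^{s-2}$ from the spectral expansion, compare with $\sum_j j^{2(s-2)/d}$ via \eqref{eq:weyl}, and pass to $H^s(\Omega)$ by \eqref{eq:elliptic_scale_sobolev_embedding}. The paper cites Bolin--Kirchner--Kov\'acs for well-posedness in the resolvent sense, whereas you check existence and uniqueness directly by testing against the eigenfunctions. One small correction: the divergence at $s=2-d/2$ comes from the \emph{upper} bound $\lambda_j\le C_\lambda j^{2/d}$ in \eqref{eq:weyl}, which gives $\lambda_j^{-d/2}\ge C_\lambda^{-d/2}j^{-1}$; the lower bound only bounds the terms from above.
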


\begin{proof}
  This is the case $\beta=1$ and eigenvalue-growth exponent
  $\alpha=2/d$ of
  \cite[Proposition~2.3 and Remark~2.4]
       {BolinKirchnerKovacs2020FractionalSPDE}.
  Indeed, for every $s\in\mathbb R$,
  \begin{equation}
    \mathbb E\qb{
      \Norm{Y}_{\mathcal H^s_{\Lop}}^2
    }
    =
    \sum_{j=1}^\infty
    \lambda_j^{s-2}.
    \label{eq:Y_Hs_moment}
  \end{equation}
  By \eqref{eq:weyl}, this series has the same convergence behaviour
  as $\sum_{j=1}^\infty j^{2(s-2)/d},$ and is therefore finite
  precisely when $s<2-d/2$.  The choice $s=0$ gives convergence in
  $L^2\qp{\Xi;L^2(\Omega)}$, while $s=2-d/2$ gives the critical
  logarithmic divergence.  Finally, \eqref{eq:Y_regular} follows from
  \eqref{eq:elliptic_scale_sobolev_embedding}.
\end{proof}

\section{Polytopic symmetric interior-penalty discretisation}
\label{sec:polytopic_dg}

The convergence analysis separates restriction of the stochastic
forcing from approximation of the elliptic resolvent.  Let $\Wh$ be
the canonical restriction of spatial white noise to a finite element
space $V_h$, defined below, and let $Y^{(h)}$ solve $\Lop
Y^{(h)}=\Wh$.  Then
\begin{equation}
  Y-Y_h
  =
  \qp{Y-Y^{(h)}}
  +
  \qp{Y^{(h)}-Y_h}.
  \label{eq:basic_error_split}
\end{equation}
The first term measures restriction of the white noise, while the
second is a deterministic SIP error with an $L^2$ right-hand side.

\subsection{Polytopic meshes and discrete spaces}
\label{subsec:poly_mesh}

Let $\mathcal T_h$ be a partition of $\Omega$ into non-overlapping open
polytopes $K$ satisfying
\begin{equation}
  \overline\Omega
  =
  \bigcup_{K\in\mathcal T_h}\overline K,
  \qquad
  K\cap K'
  =
  \emptyset
  \quad
  \text{for }K\neq K'.
  \label{eq:mesh_partition}
\end{equation}
For $K\in\mathcal T_h$, let
$h_K:=\operatorname{diam}(K)$ and set
\begin{equation}
  h
  :=
  \max_{K\in\mathcal T_h}h_K.
  \label{eq:mesh_size}
\end{equation}

Let $\mathcal F_h=\mathcal F_h^i\cup\mathcal F_h^b$ be the set of open
$(d-1)$-dimensional mesh faces.  For
$F=\partial K^-\cap\partial K^+\in\mathcal F_h^i$, fix a unit normal
$\vec n_F$ pointing from $K^-$ to $K^+$; on boundary faces,
$\vec n_F$ is the outward unit normal.  We use the face scale
\begin{equation}
  h_F
  :=
  \begin{cases}
    \displaystyle
    \frac{2h_{K^-}h_{K^+}}
         {h_{K^-}+h_{K^+}},
    &
    F=\partial K^-\cap\partial K^+
    \in\mathcal F_h^i,
    \\[3mm]
    h_K,
    &
    F\subset\partial K
    \in\mathcal F_h^b.
  \end{cases}
  \label{eq:face_scale}
\end{equation}

For a piecewise scalar field $v$, define on interior faces
\begin{equation}
  \jump{v}
  :=
  v^--v^+,
  \qquad
  \avg{v}
  :=
  \frac12\qp{v^-+v^+},
  \label{eq:jump_average}
\end{equation}
and set $\jump{v}:=v$ and $\avg{v}:=v$ on boundary faces.  Averages
of vector fields are taken componentwise.

\begin{assumption}[Polytopic mesh regularity]
  \label{ass:poly_mesh}
  There exists $\vartheta_\ast\in(0,1]$, independent of $h$, such
  that every $K\in\mathcal T_h$ contains a ball
  $B(x_K,\rho_K)$ in its kernel satisfying
  \begin{equation}
    \rho_K
    \ge
    \vartheta_\ast h_K.
    \label{eq:poly_shape_reg}
  \end{equation}
  Equivalently,
  \begin{equation}
    B(x_K,\rho_K)\subset K,
    \qquad
    \qb{x,y}\subset K
    \quad
    \forall x\in B(x_K,\rho_K),
    \quad
    \forall y\in K.
    \label{eq:poly_kernel_ball}
  \end{equation}
  Moreover, the mesh family is quasi-uniform in that there exists
  $c_{\mathrm{qu}}>0$, independent of $h$, such that
  \begin{equation}
    c_{\mathrm{qu}}h
    \le
    h_K
    \le
    h
    \qquad
    \forall K\in\mathcal T_h.
    \label{eq:quasi_uniform}
  \end{equation}
\end{assumption}

Assumption~\ref{ass:poly_mesh} implies
\begin{equation}
  \abs{K}
  \simeq
  h_K^d,
  \label{eq:element_volume_regular}
\end{equation}
with constants depending only on $\vartheta_\ast$ and $d$.
It also implies the geometric condition required by
\cite[Lemma~2.2]{botti2025trace}.  Indeed, triangulating each face and
joining the resulting $(d-1)$-simplices $F_T$ to $x_K$ gives
\begin{equation}
  \frac{d\abs{T}}{\abs{F_T}}
  =
  \operatorname{dist}
  \qp{x_K,\operatorname{aff}(F_T)}
  \ge
  \rho_K
  \ge
  \vartheta_\ast h_K.
  \label{eq:botti_geometric_condition}
\end{equation}
Hence condition~(7) of \cite{botti2025trace} holds uniformly.  In
particular, no uniform lower bound is required on the size of an
individual face, and the number of faces per element may be
arbitrarily large.

The quasi-uniformity condition \eqref{eq:quasi_uniform} is actually
not needed for the local estimates below; it will be used subsequently
to pass from local mesh scales to $h$ and to obtain $N_h\simeq
h^{-d}$.

Introduce the broken Sobolev space
\begin{equation}
  H^1(\mathcal T_h)
  :=
  \ensemble{
    v\in L^2(\Omega)
  }{
    v|_K\in H^1(K)
    \quad
    \forall K\in\mathcal T_h
  },
  \label{eq:broken_h1}
\end{equation}
with broken gradient
\begin{equation}
  \qp{\nabla_hv}|_K
  :=
  \nabla\qp{v|_K}.
  \label{eq:broken_gradient}
\end{equation}

\begin{lemma}[Trace, inverse and approximation estimates]
  \label{lem:poly_trace_inverse}
  Under the first part of
  Assumption~\ref{ass:poly_mesh},
  \begin{equation}
    \Norm{v}_{L^2(\partial K)}^2
    \lesssim
    h_K^{-1}
    \Norm{v}_{L^2(K)}^2
    +
    h_K
    \Norm{\nabla v}_{L^2(K)}^2
    \qquad
    \forall v\in H^1(K).
    \label{eq:poly_trace}
  \end{equation}

  For every fixed polynomial degree $k\ge1$,
  \begin{align}
    \Norm{\nabla v_h}_{L^2(K)}
    &\lesssim
    h_K^{-1}
    \Norm{v_h}_{L^2(K)},
    \label{eq:poly_inverse}
    \\
    \Norm{v_h}_{L^2(\partial K)}
    &\lesssim
    h_K^{-1/2}
    \Norm{v_h}_{L^2(K)}
    \label{eq:poly_trace_inverse}
  \end{align}
  for all $v_h\in\mathbb P_k(K)$.  Consequently,
  \begin{equation}
    \sum_{F\subset\partial K}
    h_K
    \Norm{
      \vec\tau_h\cdot\vec n_F
    }_{L^2(F)}^2
    \lesssim
    \Norm{
      \vec\tau_h
    }_{L^2(K)}^2
    \qquad
    \forall
    \vec\tau_h\in\qb{\mathbb P_k(K)}^d.
    \label{eq:poly_discrete_trace}
  \end{equation}

  Let $\Pih$ be the elementwise $L^2$-orthogonal projection, defined
  by
  \[
    \qp{\Pih v}|_K
    =
    \Pi_K\qp{v|_K},
    \qquad
    \Pi_K
    :
    L^2(K)
    \longrightarrow
    \mathbb P_k(K).
  \]
  Then, for every $v\in H^2(K)$,
  \begin{equation}
    \begin{split}
      &
      \Norm{v-\Pi_Kv}_{L^2(K)}
      +
      h_K
      \Norm{
        \nabla\qp{v-\Pi_Kv}
      }_{L^2(K)}
      +
      h_K^2
      \Norm{
        D^2\qp{v-\Pi_Kv}
      }_{L^2(K)}
      \\
      &\qquad
      +
      h_K^{1/2}
      \Norm{
        v-\Pi_Kv
      }_{L^2(\partial K)}
      +
      h_K^{3/2}
      \Norm{
        \nabla\qp{v-\Pi_Kv}
      }_{L^2(\partial K)}
      \lesssim
      h_K^2
      \Norm{v}_{H^2(K)}.
    \end{split}
    \label{eq:poly_projection_approximation}
  \end{equation}
  Consequently, 
  \begin{equation}
    \Norm{
      v-\Pih v
    }_{L^2(\Omega)}
    \lesssim
    h^2
    \Norm{v}_{H^2(\Omega)}
    \qquad
    \forall v\in H^2(\Omega).
    \label{eq:L2_projection_H2}
  \end{equation}

  All hidden constants are independent of the number and relative
  sizes of the faces.
\end{lemma}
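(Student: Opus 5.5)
The plan is to reduce every estimate to a simplex-wise argument on a sub-triangulation of $K$ with apex $x_K$, combined with polynomial norm equivalences on the kernel ball $B(x_K,\rho_K)$. For the continuous trace inequality \eqref{eq:poly_trace} I would invoke \cite[Lemma~2.2]{botti2025trace} directly. Its geometric hypothesis is exactly \eqref{eq:botti_geometric_condition}, so the constant depends only on $\vartheta_\ast$ and $d$.

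To see the mechanism that makes the constant independent of the face count, I would also sketch the underlying divergence argument. Take the vector field $\vec x-x_K$. On each face $F$ one has $(\vec x-x_K)\cdot\vec n_F=\operatorname{dist}(x_K,\operatorname{aff}F)\ge\rho_K$, because $x_K$ lies in the kernel. Hence
\[
  \rho_K\Norm{v}_{L^2(\partial K)}^2
  \le
  \int_{\partial K}v^2(\vec x-x_K)\cdot\vec n
  =
  \int_K\qp{d\,v^2+2v\nabla v\cdot(\vec x-x_K)},
\]
and the right-hand side is bounded by $d\Norm{v}^2_{L^2(K)}+2h_K\Norm{v}_{L^2(K)}\Norm{\nabla v}_{L^2(K)}$. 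Dividing by $\rho_K\ge\vartheta_\ast h_K$ and applying Young's inequality gives \eqref{eq:poly_trace}. No face-dependent quantity appears anywhere in this argument.

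For the inverse estimate \eqref{eq:poly_inverse}, I would use that $B(x_K,\rho_K)\subset K\subset B(x_K,h_K)$ with $\rho_K\ge\vartheta_\ast h_K$. On the finite-dimensional space $\mathbb P_k$, scaling to the unit ball gives norm equivalence between the ball $B(x_K,\rho_K)$ and the larger ball $B(x_K,h_K)$, with a constant depending only on $k$, $d$ and $\vartheta_\ast$. Consequently $\Norm{\nabla v_h}_{L^2(K)}\le\Norm{\nabla v_h}_{L^2(B(x_K,h_K))}$. The standard inverse inequality on that ball bounds this by $h_K^{-1}\Norm{v_h}_{L^2(B(x_K,h_K))}$, and the norm equivalence bounds that in turn by $h_K^{-1}\Norm{v_h}_{L^2(B(x_K,\rho_K))}\le h_K^{-1}\Norm{v_h}_{L^2(K)}$. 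The discrete trace estimate \eqref{eq:poly_trace_inverse} then follows by inserting \eqref{eq:poly_inverse} into \eqref{eq:poly_trace}. Finally, \eqref{eq:poly_discrete_trace} follows by applying \eqref{eq:poly_trace_inverse} componentwise, using $|\vec\tau_h\cdot\vec n_F|\le|\vec\tau_h|$ and summing the face contributions into $\Norm{\cdot}_{L^2(\partial K)}$.

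For the approximation estimate \eqref{eq:poly_projection_approximation}, the first step is a Bramble--Hilbert polynomial. Because $K$ is star-shaped with respect to a ball of radius $\ge\vartheta_\ast h_K$, the averaged Taylor polynomial $q\in\mathbb P_1\subset\mathbb P_k$ of Dupont--Scott type satisfies $|v-q|_{H^m(K)}\lesssim h_K^{2-m}|v|_{H^2(K)}$ for $m=0,1,2$. Its constant depends only on the chunkiness parameter $h_K/\rho_K\le\vartheta_\ast^{-1}$; this requires $k\ge1$. Next, since $\Pi_Kq=q$, I would write $v-\Pi_Kv=(v-q)-\Pi_K(v-q)$. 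The $L^2$ term follows from $L^2$-stability of $\Pi_K$. The gradient and Hessian terms follow by applying \eqref{eq:poly_inverse}, and its iterate on $\mathbb P_k$, to $\Pi_K(v-q)$, whose $L^2$ norm is at most $\Norm{v-q}_{L^2(K)}\lesssim h_K^2|v|_{H^2(K)}$. The boundary terms follow by applying \eqref{eq:poly_trace} to $v-\Pi_Kv$ and, componentwise, to its gradient, using the volume bounds just obtained. Summing the squares over $K$ and using $h_K\le h$ gives \eqref{eq:L2_projection_H2}.

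The main obstacle is ensuring that the trace inequality carries no dependence on the number or size of faces. The proof handles this through the divergence identity, or equivalently \cite{botti2025trace}, which only uses the distance from $x_K$ to each face plane. Everything else inherits uniformity from \eqref{eq:poly_trace} and the ball-based constants.
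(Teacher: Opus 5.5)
Your proposal is correct and follows the paper's proof step for step. The paper obtains the trace inequality from \cite[Lemma~2.2]{botti2025trace} and Young's inequality, derives the trace-inverse and discrete trace bounds by combination and summation over faces, and proves the approximation estimate by splitting $v-\Pi_Kv=(v-q)-\Pi_K(v-q)$ around a near-best polynomial $q$, followed by inverse and trace estimates; you do the same.

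The only difference is that you also prove the two ingredients the paper merely cites. For \eqref{eq:poly_trace} you give the divergence identity with $\vec x-x_K$; note that the bound $(\vec x-x_K)\cdot\vec n_F\ge\rho_K$ needs the whole ball $B(x_K,\rho_K)$ to lie in the kernel, not just $x_K$, which the assumption provides. For \eqref{eq:poly_inverse} you give the ball norm-equivalence argument. Together these make the independence from the number and size of the faces explicit.
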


\begin{proof}
  By \eqref{eq:botti_geometric_condition}, the mesh condition of
  \cite[Section~2.2]{botti2025trace} holds uniformly.  Taking
  $p=q=2$ in \cite[Lemma~2.2]{botti2025trace} gives
  \[
    \Norm{v}_{L^2(\partial K)}^2
    \lesssim
    h_K^{-1}
    \Norm{v}_{L^2(K)}^2
    +
    \Norm{\nabla v}_{L^2(K)}
    \Norm{v}_{L^2(K)}.
  \]
  Young's inequality with the scale $h_K$ yields
  \eqref{eq:poly_trace}.

  The polynomial inverse and approximation estimates on uniformly
  star-shaped polytopes are standard; see, for example,
  \cite{CangianiGeorgoulisHouston2014,
    CangianiDongGeorgoulisHouston2016ADRPolytopic,
    CangianiDongGeorgoulis2022Arbitrary}.
  Combining \eqref{eq:poly_trace} with
  \eqref{eq:poly_inverse} gives
  \eqref{eq:poly_trace_inverse}.  Moreover,
  \[
    \sum_{F\subset\partial K}
    \Norm{
      \vec\tau_h\cdot\vec n_F
    }_{L^2(F)}^2
    \leq
    \Norm{
      \vec\tau_h
    }_{L^2(\partial K)}^2,
  \]
  so \eqref{eq:poly_discrete_trace} follows immediately.

  For \eqref{eq:poly_projection_approximation}, choose a polynomial
  $p_K\in\mathbb P_k(K)$ satisfying the standard $H^2$
  approximation estimates.  The $L^2$ best-approximation property
  of $\Pi_K$, followed by the polynomial inverse estimate applied to
  $p_K-\Pi_Kv$, gives the three volume estimates.  Applying
  \eqref{eq:poly_trace} first to $v-\Pi_Kv$ and then to each
  component of $\nabla(v-\Pi_Kv)$ gives the two boundary estimates.
  Summing the local $L^2$ estimate and using
  \eqref{eq:quasi_uniform} proves
  \eqref{eq:L2_projection_H2}.
\end{proof}

For fixed $k\ge1$, define
\begin{equation}
  V_h
  :=
  \ensemble{
    v_h\in L^2(\Omega)
  }{
    v_h|_K\in\mathbb P_k(K)
    \quad
    \forall K\in\mathcal T_h
  }.
  \label{eq:dg_space}
\end{equation}
By \eqref{eq:element_volume_regular},
\eqref{eq:quasi_uniform} and the fixed polynomial degree,
\begin{equation}
  N_h
  :=
  \dim V_h
  \simeq
  h^{-d}.
  \label{eq:dimension_bound}
\end{equation}

\subsection{SIP form and discrete operator}
\label{subsec:dg_form}

Introduce the broken vector-polynomial space
\begin{equation}
  \Sigma_h
  :=
  \ensemble{
    \vec\tau_h\in L^2\qp{\Omega;\mathbb R^d}
  }{
    \vec\tau_h|_K\in\qb{\mathbb P_k(K)}^d
    \quad
    \forall K\in\mathcal T_h
  }.
  \label{eq:lifting_space}
\end{equation}
The lifting $\Rh:L^2(\mathcal F_h)\to\Sigma_h$ is defined by
\begin{equation}
  \qp{
    \Rh(\varphi),\vec\tau_h
  }_{L^2(\Omega)}
  =
  -
  \sum_{F\in\mathcal F_h}
  \qp{
    \varphi,
    \avg{
      \vec\tau_h\cdot\vec n_F
    }
  }_{L^2(F)}
  \qquad
  \forall\vec\tau_h\in\Sigma_h.
  \label{eq:lifting}
\end{equation}
By \eqref{eq:poly_discrete_trace},
\begin{equation}
  \Norm{
    \Rh(\varphi)
  }_{L^2(\Omega)}
  \lesssim
  \qp{
    \sum_{F\in\mathcal F_h}
    h_F^{-1}
    \Norm{\varphi}_{L^2(F)}^2
  }^{1/2}.
  \label{eq:lifting_stability}
\end{equation}

For $\gamma>0$, set
\begin{equation}
  \pen{F}
  :=
  \frac{\gamma k^2}{h_F}.
  \label{eq:penalty_parameter}
\end{equation}
The SIP form $a_h:V_h\times V_h\to\mathbb R$ is
\begin{equation}
  \begin{split}
    a_h(w_h,v_h)
    &:=
    \qp{
      \nabla_hw_h,\nabla_hv_h
    }_{L^2(\Omega)}
    +
    \kappa^2
    \qp{
      w_h,v_h
    }_{L^2(\Omega)}
    \\
    &\quad
    -
    \sum_{F\in\mathcal F_h}
    \qp{
      \avg{
        \nabla_hw_h\cdot\vec n_F
      },
      \jump{v_h}
    }_{L^2(F)}
    \\
    &\quad
    -
    \sum_{F\in\mathcal F_h}
    \qp{
      \avg{
        \nabla_hv_h\cdot\vec n_F
      },
      \jump{w_h}
    }_{L^2(F)}
    \\
    &\quad
    +
    \sum_{F\in\mathcal F_h}
    \pen{F}
    \qp{
      \jump{w_h},\jump{v_h}
    }_{L^2(F)}.
  \end{split}
  \label{eq:sip_form}
\end{equation}
Since $\nabla_hV_h\subset\Sigma_h$, the two consistency terms in
\eqref{eq:sip_form} may equivalently be represented through the
lifting.  For example,
\[
  -
  \sum_{F\in\mathcal F_h}
  \qp{
    \avg{
      \nabla_hw_h\cdot\vec n_F
    },
    \jump{v_h}
  }_{L^2(F)}
  =
  \qp{
    \Rh\qp{\jump{v_h}},
    \nabla_hw_h
  }_{L^2(\Omega)}.
\]
Thus the lifting estimate \eqref{eq:lifting_stability} controls the
face terms without any dependence on the number or relative sizes of
the faces.

For $v_h\in V_h$, define
\begin{equation}
  \dgnorm{h}{v_h}^2
  :=
  \Norm{\nabla_hv_h}_{L^2(\Omega)}^2
  +
  \kappa^2
  \Norm{v_h}_{L^2(\Omega)}^2
  +
  \sum_{F\in\mathcal F_h}
  \pen{F}
  \Norm{\jump{v_h}}_{L^2(F)}^2.
  \label{eq:dg_norm}
\end{equation}

\begin{lemma}[SIP stability]
  \label{lem:sip_stability}
  For $\gamma$ sufficiently large,
  \begin{equation}
    a_h(v_h,v_h)
    \gtrsim
    \dgnorm{h}{v_h}^2,
    \qquad
    \norm{
      a_h(w_h,v_h)
    }
    \lesssim
    \dgnorm{h}{w_h}
    \dgnorm{h}{v_h}
    \label{eq:sip_stability}
  \end{equation}
  for all $w_h,v_h\in V_h$, with constants independent of $h$ and
  of the number and relative sizes of the element faces.
\end{lemma}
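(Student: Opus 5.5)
The plan is to write the two consistency terms through the lifting, as indicated just before the statement, and control them with \eqref{eq:lifting_stability}. This way no face-by-face trace inequality is needed, and the constants cannot depend on the number or relative sizes of the faces.

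\textbf{Step 1: a uniform bound on the lifting.} For $v_h\in V_h$ write
\[
  -\sum_{F\in\mathcal F_h}\qp{\avg{\nabla_hw_h\cdot\vec n_F},\jump{v_h}}_{L^2(F)}
  =\qp{\Rh(\jump{v_h}),\nabla_hw_h}_{L^2(\Omega)}.
\]
By \eqref{eq:lifting_stability} and \eqref{eq:penalty_parameter},
\[
  \Norm{\Rh(\jump{v_h})}_{L^2(\Omega)}^2
  \le C_R\sum_{F}h_F^{-1}\Norm{\jump{v_h}}_{L^2(F)}^2
  =\frac{C_R}{\gamma k^2}\sum_F\pen{F}\Norm{\jump{v_h}}_{L^2(F)}^2 .
\]
Here $C_R$ comes from \eqref{eq:poly_discrete_trace}, so it depends only on $\vartheta_\ast$, $d$ and $k$. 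The derivation of \eqref{eq:lifting_stability} is where the argument has to be careful about faces. Test \eqref{eq:lifting} with $\vec\tau_h=\Rh(\varphi)$ and split each face term into the contributions of $K^\pm$. Bound $h_F^{-1}\le h_K^{-1}\max(1,h_{K'}/h_K)$ using the harmonic mean \eqref{eq:face_scale}; the ratio $h_{K'}/h_K$ is bounded by quasi-uniformity, or locally, since $h_F\ge\min(h_{K^-},h_{K^+})$. Then apply Cauchy--Schwarz over the faces of each element and use \eqref{eq:poly_discrete_trace}. This step uses only the \emph{sum} over faces of $K$ and never a single face, which is why no face-count or face-size dependence enters.

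\textbf{Step 2: coercivity.} Since $a_h(v_h,v_h)=\Norm{\nabla_hv_h}^2+\kappa^2\Norm{v_h}^2+2(\Rh(\jump{v_h}),\nabla_hv_h)+\sum_F\pen{F}\Norm{\jump{v_h}}^2$, Young's inequality gives
\[
  2\norm{(\Rh(\jump{v_h}),\nabla_hv_h)}\le\tfrac12\Norm{\nabla_hv_h}^2+2\Norm{\Rh(\jump{v_h})}^2 .
\]
Combining this with Step 1,
\[
  a_h(v_h,v_h)\ge\tfrac12\Norm{\nabla_hv_h}^2+\kappa^2\Norm{v_h}^2+\Bigl(1-\frac{2C_R}{\gamma k^2}\Bigr)\sum_F\pen{F}\Norm{\jump{v_h}}^2 .
\]
Choosing $\gamma\ge 4C_R/k^2$ then yields $a_h(v_h,v_h)\ge\frac12\dgnorm{h}{v_h}^2$.

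\textbf{Step 3: continuity.} Bound the four groups of terms separately. The volume terms are bounded by Cauchy--Schwarz. Each consistency term is bounded as
\[
  \norm{(\Rh(\jump{v_h}),\nabla_hw_h)}\le (C_R/\gamma k^2)^{1/2}\dgnorm{h}{v_h}\dgnorm{h}{w_h},
\]
and symmetrically with $v_h$ and $w_h$ exchanged. The penalty term is bounded by a weighted Cauchy--Schwarz over the faces. Together these give $\norm{a_h(w_h,v_h)}\lesssim\dgnorm{h}{w_h}\dgnorm{h}{v_h}$.

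The only delicate point is Step 1: justifying that the lifting bound is uniform in the face geometry. This rests entirely on the face-sum trace inequality \eqref{eq:poly_discrete_trace}, inherited from \cite{botti2025trace}, together with handling the harmonic-mean face scale on interfaces between neighbouring elements of different size.
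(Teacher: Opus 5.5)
Your proof is correct and follows the paper's argument: you rewrite the consistency terms through the lifting, apply \eqref{eq:lifting_stability}, use Young's inequality with $\gamma$ large for coercivity, and use Cauchy--Schwarz for continuity. There is one small slip, in your optional re-derivation of \eqref{eq:lifting_stability}. The inequality that step actually needs is $h_K^{-1}\le 2h_F^{-1}$, i.e.\ $h_F\le 2\min(h_{K^-},h_{K^+})$, which holds for the harmonic mean without quasi-uniformity. Your stated bound $h_F^{-1}\le h_K^{-1}\max(1,h_{K'}/h_K)$ points the wrong way and has the ratio inverted; this is harmless, since the paper already provides \eqref{eq:lifting_stability} before the lemma.
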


\begin{proof}
  Represent the consistency terms through $\Rh$ and apply
  \eqref{eq:lifting_stability}.  Cauchy--Schwarz and Young's
  inequality then give coercivity for $\gamma$ sufficiently large
  and continuity in the DG norm.
\end{proof}

Define $L_h:V_h\to V_h$ by
\begin{equation}
  \qp{
    L_hw_h,v_h
  }_{L^2(\Omega)}
  =
  a_h(w_h,v_h)
  \qquad
  \forall w_h,v_h\in V_h.
  \label{eq:discrete_operator}
\end{equation}
Lemma~\ref{lem:sip_stability} shows that $L_h$ is symmetric positive
definite.

\subsection{Deterministic resolvent approximation}
\label{subsec:deterministic_L2}

For the consistency analysis under
Assumption~\ref{ass:elliptic_regular}, set
\begin{equation}
  \mathcal V_h^\sharp
  :=
  \qp{
    H^2(\Omega)\cap H^1_0(\Omega)
  }
  +
  V_h.
  \label{eq:extended_sip_space}
\end{equation}
For $w,v\in\mathcal V_h^\sharp$, let $a_h^\sharp(w,v)$ be given by
the right-hand side of \eqref{eq:sip_form}, with $(w_h,v_h)$ replaced
by $(w,v)$.  The face terms are well-defined by the $H^2$ trace
regularity, and the restriction of $a_h^\sharp$ to
$V_h\times V_h$ is precisely $a_h$.

Set
\begin{equation}
  \begin{split}
    \Norm{v}_{h,\ast}^2
    &:=
    \Norm{\nabla_hv}_{L^2(\Omega)}^2
    +
    \kappa^2
    \Norm{v}_{L^2(\Omega)}^2
    +
    \sum_{F\in\mathcal F_h}
    \pen{F}
    \Norm{\jump{v}}_{L^2(F)}^2
    \\
    &\quad
    +
    \sum_{F\in\mathcal F_h}
    h_F
    \Norm{
      \avg{
        \nabla_hv\cdot\vec n_F
      }
    }_{L^2(F)}^2.
  \end{split}
  \label{eq:extended_dg_norm}
\end{equation}
Cauchy--Schwarz gives
\begin{equation}
  \norm{
    a_h^\sharp(w,v)
  }
  \lesssim
  \Norm{w}_{h,\ast}
  \Norm{v}_{h,\ast}
  \qquad
  \forall w,v\in\mathcal V_h^\sharp.
  \label{eq:extended_sip_continuity}
\end{equation}
Moreover, Lemma~\ref{lem:poly_trace_inverse} implies
\begin{align}
  \Norm{v_h}_{h,\ast}
  &\lesssim
  \dgnorm{h}{v_h}
  \qquad
  \forall v_h\in V_h,
  \label{eq:discrete_extended_norm_control}
  \\
  \Norm{v-\Pih v}_{h,\ast}
  &\lesssim
  h\Norm{v}_{H^2(\Omega)}
  \qquad
  \forall v\in H^2(\Omega)\cap H^1_0(\Omega).
  \label{eq:extended_projection_approximation}
\end{align}

\begin{lemma}[Deterministic resolvent approximation]
  \label{lem:deterministic_L2}
  Suppose Assumptions~\ref{ass:elliptic_regular} and
  \ref{ass:poly_mesh} hold, and let $\gamma$ be sufficiently large for
  Lemma~\ref{lem:sip_stability}.  For $f\in L^2(\Omega)$, let
  $z=\Lop^{-1}f$ and let $z_h\in V_h$ satisfy
  \begin{equation}
    a_h(z_h,v_h)
    =
    \qp{
      f,v_h
    }_{L^2(\Omega)}
    \qquad
    \forall v_h\in V_h.
    \label{eq:deterministic_dg_problem}
  \end{equation}
  Then
  \begin{equation}
    \Norm{
      z-z_h
    }_{L^2(\Omega)}
    \lesssim
    h^2
    \Norm{
      f
    }_{L^2(\Omega)}.
    \label{eq:deterministic_L2_error}
  \end{equation}
  Equivalently,
  \begin{equation}
    \Norm{
      \Lop^{-1}
      -
      L_h^{-1}\Pih
    }_{\mathcal L\qp{L^2(\Omega),L^2(\Omega)}}
    \lesssim
    h^2.
    \label{eq:resolvent_error}
  \end{equation}
\end{lemma}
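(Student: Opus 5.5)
The plan is the standard two-step argument: an energy-norm error estimate obtained from Galerkin orthogonality, followed by an Aubin--Nitsche duality argument. Both steps use the extended form $a_h^\sharp$ and the norm $\Norm{\cdot}_{h,\ast}$.

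\emph{Consistency.} Under Assumption~\ref{ass:elliptic_regular} we have $z\in H^2(\Omega)\cap H^1_0(\Omega)$, with $\Norm{z}_{H^2(\Omega)}\le C_{\mathrm{reg}}\Norm{f}_{L^2(\Omega)}$. Hence $\jump{z}=0$ on every face, including boundary faces, and $\nabla z\cdot\vec n_F$ is single valued with an $L^2(F)$ trace. Integrating by parts elementwise gives, for every $v_h\in V_h$,
\[
  \qp{\nabla z,\nabla_hv_h}_{L^2(\Omega)}
  =
  \qp{-\Delta z,v_h}_{L^2(\Omega)}
  +\sum_{F\in\mathcal F_h}\qp{\avg{\nabla z\cdot\vec n_F},\jump{v_h}}_{L^2(F)}.
\]
Here the element boundaries are split into the faces $F$; the number of faces plays no role, since only the identity is used. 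It follows that $a_h^\sharp(z,v_h)=(f,v_h)_{L^2(\Omega)}$, and therefore the Galerkin orthogonality $a_h^\sharp(z-z_h,v_h)=0$ holds for all $v_h\in V_h$.

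\emph{Energy estimate.} Write $z-z_h=(z-\Pih z)+(\Pih z-z_h)$. Coercivity from Lemma~\ref{lem:sip_stability}, followed by orthogonality, \eqref{eq:extended_sip_continuity} and \eqref{eq:discrete_extended_norm_control}, gives
\[
  \dgnorm{h}{\Pih z-z_h}^2
  \lesssim a_h^\sharp(\Pih z-z,\Pih z-z_h)
  \lesssim \Norm{z-\Pih z}_{h,\ast}\dgnorm{h}{\Pih z-z_h}.
\]
Combining this with \eqref{eq:extended_projection_approximation} yields $\Norm{z-z_h}_{h,\ast}\lesssim h\Norm{z}_{H^2(\Omega)}\lesssim h\Norm{f}_{L^2(\Omega)}$.

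\emph{Duality.} Let $e=z-z_h$ and $\psi=\Lop^{-1}e\in H^2(\Omega)\cap H^1_0(\Omega)$, so that $\Norm{\psi}_{H^2(\Omega)}\lesssim\Norm{e}_{L^2(\Omega)}$. The same integration by parts, now with the rough argument in the first slot, gives $a_h^\sharp(w,\psi)=(w,e)_{L^2(\Omega)}$ for all $w\in\mathcal V_h^\sharp$. Two facts make this work. First, $\jump{\psi}=0$, which removes the term $\avg{\nabla_h w\cdot \vec n_F}\jump{\psi}$ and the penalty term. Second, $\nabla\psi\cdot\vec n_F$ is continuous, so $\sum_K(\nabla\psi\cdot\vec n_K,w)_{\partial K}=\sum_F(\avg{\nabla\psi\cdot\vec n_F},\jump{w})_F$. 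Applying this with $w=e$, and then symmetry of $a_h^\sharp$ together with orthogonality, gives
\[
  \Norm{e}_{L^2(\Omega)}^2=a_h^\sharp(e,\psi)=a_h^\sharp(e,\psi-\Pih\psi)
  \lesssim\Norm{e}_{h,\ast}\Norm{\psi-\Pih\psi}_{h,\ast}
  \lesssim h\Norm{f}_{L^2(\Omega)}\,h\Norm{e}_{L^2(\Omega)},
\]
which is \eqref{eq:deterministic_L2_error}.

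\emph{Operator form.} The resolvent statement \eqref{eq:resolvent_error} is a restatement of this bound. Since $(f,v_h)_{L^2(\Omega)}=(\Pih f,v_h)_{L^2(\Omega)}$ for all $v_h\in V_h$, we have $z_h=L_h^{-1}\Pih f$ by \eqref{eq:discrete_operator}.

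The main point needing care is the extended consistency identity. The face terms of $a_h^\sharp$ must make sense for $z,\psi\in H^2$ on polytopes with arbitrarily many small faces, and this is ensured by the trace bound \eqref{eq:poly_trace} applied to $\nabla z$. In addition, $\Norm{e}_{h,\ast}$ must be controlled including the $h_F\Norm{\avg{\nabla_h\cdot\vec n_F}}^2$ term. For the discrete part this follows from \eqref{eq:discrete_extended_norm_control}, since $\dgnorm{h}{\Pih z-z_h}$ is already bounded; the remaining contribution is $z-\Pih z$, handled by \eqref{eq:extended_projection_approximation}. All constants are therefore uniform in the face count.
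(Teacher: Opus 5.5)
Your proposal is correct and follows the paper's proof essentially step for step. The shared steps are: consistency of $a_h^\sharp$ by elementwise integration by parts; the split $z-z_h=(z-\Pih z)+(\Pih z-z_h)$ with coercivity and \eqref{eq:extended_projection_approximation} for the energy bound; Aubin--Nitsche duality with $\psi=\Lop^{-1}e$; and $z_h=L_h^{-1}\Pih f$ for the operator form. You also check explicitly that $a_h^\sharp(w,\psi)=(w,e)_{L^2(\Omega)}$ holds for all $w\in\mathcal V_h^\sharp$, not only for discrete $w$; this is a useful detail that the paper leaves implicit under ``symmetry, consistency''.
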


\begin{proof}
  Elliptic regularity gives
  \[
    \Norm{z}_{H^2(\Omega)}
    \lesssim
    \Norm{f}_{L^2(\Omega)}.
  \]
  Since $z\in H^2(\Omega)\cap H^1_0(\Omega)$, elementwise integration
  by parts yields
  \begin{equation}
    a_h^\sharp(z,v_h)
    =
    \qp{
      f,v_h
    }_{L^2(\Omega)}
    \qquad
    \forall v_h\in V_h.
    \label{eq:exact_sip_consistency_identity}
  \end{equation}
  Hence
  \[
    a_h^\sharp(z-z_h,v_h)
    =
    0
    \qquad
    \forall v_h\in V_h.
  \]
  Let
  \[
    \eta
    :=
    z-\Pih z,
    \qquad
    \xi_h
    :=
    \Pih z-z_h.
  \]
  Galerkin orthogonality gives
  \[
    a_h(\xi_h,v_h)
    =
    -
    a_h^\sharp(\eta,v_h)
    \qquad
    \forall v_h\in V_h.
  \]
  Taking $v_h=\xi_h$ and using Lemma~\ref{lem:sip_stability},
  \eqref{eq:extended_sip_continuity},
  \eqref{eq:discrete_extended_norm_control} and
  \eqref{eq:extended_projection_approximation}, we obtain
  \[
    \dgnorm{h}{\xi_h}
    \lesssim
    h\Norm{z}_{H^2(\Omega)}.
  \]
  Therefore, with $e:=z-z_h=\eta+\xi_h$,
  \begin{equation}
    \Norm{e}_{h,\ast}
    \lesssim
    h\Norm{z}_{H^2(\Omega)}.
    \label{eq:deterministic_energy_error}
  \end{equation}

  For the $L^2$ estimate, let
  \[
    \psi
    :=
    \Lop^{-1}e.
  \]
  By Assumption~\ref{ass:elliptic_regular},
  \[
    \Norm{\psi}_{H^2(\Omega)}
    \lesssim
    \Norm{e}_{L^2(\Omega)}.
  \]
  Symmetry, consistency and Galerkin orthogonality give
  \[
    \Norm{e}_{L^2(\Omega)}^2
    =
    a_h^\sharp(e,\psi)
    =
    a_h^\sharp
    \qp{
      e,\psi-\Pih\psi
    }.
  \]
  Hence, by
  \eqref{eq:extended_sip_continuity},
  \eqref{eq:extended_projection_approximation} and
  \eqref{eq:deterministic_energy_error},
  \begin{align*}
    \Norm{e}_{L^2(\Omega)}^2
    &\lesssim
    \Norm{e}_{h,\ast}
    \Norm{\psi-\Pih\psi}_{h,\ast}
    \\
    &\lesssim
    h^2
    \Norm{z}_{H^2(\Omega)}
    \Norm{\psi}_{H^2(\Omega)}
    \\
    &\lesssim
    h^2
    \Norm{f}_{L^2(\Omega)}
    \Norm{e}_{L^2(\Omega)}.
  \end{align*}
  This proves \eqref{eq:deterministic_L2_error}.

  Finally,
  \[
    \qp{
      f,v_h
    }_{L^2(\Omega)}
    =
    \qp{
      \Pih f,v_h
    }_{L^2(\Omega)}
    \qquad
    \forall v_h\in V_h,
  \]
  so $z_h=L_h^{-1}\Pih f$, and
  \eqref{eq:resolvent_error} follows.
\end{proof}

\subsection{Canonical discrete white noise}
\label{subsec:discrete_white_noise}

The restriction of $\mathcal W$ to $V_h$ is represented by the unique
Gaussian random variable $\Wh\in L^2(\Xi;V_h)$ satisfying
\begin{equation}
  \qp{
    \Wh,v_h
  }_{L^2(\Omega)}
  =
  \mathcal W(v_h)
  \qquad
  \forall v_h\in V_h.
  \label{eq:discrete_white_noise}
\end{equation}
Its covariance is
\begin{equation}
  \mathbb E\qb{
    \qp{\Wh,v_h}_{L^2(\Omega)}
    \qp{\Wh,w_h}_{L^2(\Omega)}
  }
  =
  \qp{
    v_h,w_h
  }_{L^2(\Omega)}
  \qquad
  \forall v_h,w_h\in V_h.
  \label{eq:discrete_white_noise_covariance}
\end{equation}

Let $\qc{\phi_{K,i}}_{i=1}^{N_k}$ be a basis of
$\mathbb P_k(K)$ and let
\begin{equation*}
  \qp{M_K}_{ij}
  :=
  \int_K
  \phi_{K,i}\phi_{K,j}
  \,\mathrm d x
\end{equation*}
be the local mass matrix.  The corresponding load vector
\begin{equation*}
  \vec b_K
  :=
  \qp{
    \mathcal W(\phi_{K,1}),
    \ldots,
    \mathcal W(\phi_{K,N_k})
  }^\top
\end{equation*}
is Gaussian with covariance $M_K$.  It can therefore be sampled as
\begin{equation}
  \vec b_K
  =
  M_K^{1/2}\vec\xi_K,
  \qquad
  \vec\xi_K
  \sim
  \mathcal N\qp{0,I}.
  \label{eq:local_white_noise_sampling}
\end{equation}
If $\vec c_K$ denotes the coefficient vector of $\Wh|_K$, then
$M_K\vec c_K=\vec b_K$.  Basis functions on distinct elements have
disjoint support, so the vectors $\vec b_K$ are mutually independent.
Thus \eqref{eq:local_white_noise_sampling} gives an exact element-local
sampler for the continuum white-noise load restricted to $V_h$.

If $\qc{\psi_i}_{i=1}^{N_h}$ is an $L^2$-orthonormal basis of $V_h$,
then
\[
  \Wh
  =
  \sum_{i=1}^{N_h}
  \zeta_i\psi_i,
  \qquad
  \zeta_i
  \sim
  \mathcal N\qp{0,1},
\]
with independent coefficients.  Consequently,
\begin{equation}
  \mathbb E\qb{
    \Norm{\Wh}_{L^2(\Omega)}^2
  }
  =
  N_h
  \simeq
  h^{-d}.
  \label{eq:white_noise_dimension}
\end{equation}

The discrete SPDE is: find $Y_h\in V_h$ such that
\begin{equation}
  a_h(Y_h,v_h)
  =
  \mathcal W(v_h)
  =
  \qp{
    \Wh,v_h
  }_{L^2(\Omega)}
  \qquad
  \forall v_h\in V_h,
  \label{eq:discrete_spde}
\end{equation}
or, equivalently,
\begin{equation}
  L_hY_h
  =
  \Wh.
  \label{eq:discrete_spde_operator}
\end{equation}

\subsection{Restricted noise and strong convergence}
\label{subsec:mean_square_convergence}

Let $Y^{(h)}$ be the continuum solution driven by $\Wh$:
\begin{equation}
  \Lop Y^{(h)}
  =
  \Wh.
  \label{eq:projected_noise_continuum_problem}
\end{equation}
Since $\Wh\in L^2(\Xi;L^2(\Omega))$,
Assumption~\ref{ass:elliptic_regular} gives
\begin{equation}
  Y^{(h)}
  \in
  L^2\qp{
    \Xi;
    H^2(\Omega)\cap H^1_0(\Omega)
  }.
  \label{eq:projected_noise_regularity}
\end{equation}

\begin{lemma}[White-noise restriction error]
  \label{lem:projected_white_noise}
  For $d\in\qc{2,3}$,
  \begin{equation}
    \mathbb E\qb{
      \Norm{
        Y-Y^{(h)}
      }_{L^2(\Omega)}^2
    }
    =
    \sum_{j=1}^\infty
    \lambda_j^{-2}
    \Norm{
      \qp{I-\Pih}\phi_j
    }_{L^2(\Omega)}^2.
    \label{eq:projected_noise_spectral_error}
  \end{equation}
  Consequently,
  \begin{equation}
    \Norm{
      Y-Y^{(h)}
    }_{L^2\qp{\Xi;L^2(\Omega)}}
    \lesssim
    h^{2-d/2}.
    \label{eq:projected_noise_rate}
  \end{equation}
\end{lemma}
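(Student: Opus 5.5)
We first establish the identity \eqref{eq:projected_noise_spectral_error} and then estimate the series by splitting it at the index $N_h$.

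\textbf{Step 1: the spectral identity.} By \eqref{eq:discrete_white_noise}, for every $v\in L^2(\Omega)$ we have
\[
  \qp{\Wh,v}_{L^2(\Omega)}=\qp{\Wh,\Pih v}_{L^2(\Omega)}=\mathcal W(\Pih v).
\]
In particular $\qp{\Wh,\phi_j}_{L^2(\Omega)}=\mathcal W(\Pih\phi_j)$. Hence $Y^{(h)}=\Lop^{-1}\Wh=\sum_j\lambda_j^{-1}\mathcal W(\Pih\phi_j)\phi_j$, and therefore
\[
  Y-Y^{(h)}=\sum_j\lambda_j^{-1}\,\mathcal W\qp{(I-\Pih)\phi_j}\,\phi_j .
\]
By orthonormality of $\{\phi_j\}$ and the isometry \eqref{eq:white_noise_covariance},
\[
  \mathbb E\qb{\Norm{Y-Y^{(h)}}_{L^2(\Omega)}^2}=\sum_j\lambda_j^{-2}\,\mathbb E\qb{\mathcal W\qp{(I-\Pih)\phi_j}^2}=\sum_j\lambda_j^{-2}\Norm{(I-\Pih)\phi_j}_{L^2(\Omega)}^2 .
\]
Exchanging expectation and summation is justified by Tonelli, since every term is nonnegative. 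This gives \eqref{eq:projected_noise_spectral_error}.

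\textbf{Step 2: the rate.} Let $J\simeq h^{-d}$, so that $\lambda_J\simeq h^{-2}$ by \eqref{eq:weyl}, and split the sum at $J$.

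For $j\le J$, Assumption~\ref{ass:elliptic_regular} and \eqref{eq:L2_projection_H2} give
\[
  \Norm{(I-\Pih)\phi_j}_{L^2(\Omega)}\lesssim h^2\Norm{\phi_j}_{H^2(\Omega)}\lesssim h^2\lambda_j ,
\]
since $\Lop\phi_j=\lambda_j\phi_j$. The low-frequency part is therefore bounded by
\[
  \sum_{j\le J}h^4\simeq h^4 J\simeq h^{4-d}.
\]
For $j>J$, we use only $\Norm{(I-\Pih)\phi_j}_{L^2(\Omega)}\le1$. The tail is then bounded by
\[
  \sum_{j>J}\lambda_j^{-2}\lesssim\sum_{j>J}j^{-4/d}\lesssim J^{1-4/d}\simeq h^{4-d}.
\]
This tail converges precisely because $4/d>1$ for $d\in\{2,3\}$. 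Adding the two parts and taking square roots gives $h^{2-d/2}$, which is \eqref{eq:projected_noise_rate}.

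\textbf{Main obstacle.} The delicate point is the choice of the splitting index. Using the $H^2$ bound for all $j$ would give $h^4\sum_j 1$, which diverges; using the trivial bound for all $j$ gives no rate at all. Balancing the two estimates at $\lambda_J\simeq h^{-2}$ yields the sharp exponent.

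Two further points need care. First, the constant in the low-frequency estimate must be uniform in $j$. This holds because $\Norm{\phi_j}_{H^2(\Omega)}\le C_{\mathrm{reg}}\lambda_j$ comes directly from \eqref{eq:elliptic_H2_regular}, with $C_{\mathrm{reg}}$ independent of $j$. Second, the series for $Y^{(h)}$ must converge in $L^2(\Xi;L^2(\Omega))$. This is immediate because $\Wh$ lies in the finite-dimensional space $V_h$, and \eqref{eq:white_noise_dimension} shows it has finite second moment.
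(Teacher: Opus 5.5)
Your proof is correct and follows essentially the same route as the paper. You derive the spectral identity from $(\Wh,\phi_j)_{L^2(\Omega)}=\mathcal W(\Pih\phi_j)$ together with Parseval and the isometry, then use $\|(I-\Pih)\phi_j\|_{L^2(\Omega)}\lesssim\min\{1,h^2\lambda_j\}$ and split the series at $j\simeq h^{-d}$; your extra remarks on Tonelli and on the $j$-independence of $C_{\mathrm{reg}}$ make explicit details the paper leaves implicit.
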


\begin{proof}
  For every eigenfunction $\phi_j$,
  \[
    \qp{
      \Wh,\phi_j
    }_{L^2(\Omega)}
    =
    \qp{
      \Wh,\Pih\phi_j
    }_{L^2(\Omega)}
    =
    \mathcal W(\Pih\phi_j).
  \]
  Therefore
  \[
    \qp{
      Y-Y^{(h)},\phi_j
    }_{L^2(\Omega)}
    =
    \lambda_j^{-1}
    \mathcal W\qp{
      \qp{I-\Pih}\phi_j
    }.
  \]
  Parseval's identity and the covariance of $\mathcal W$ give
  \eqref{eq:projected_noise_spectral_error}.

  By \eqref{eq:L2_projection_H2},
  Assumption~\ref{ass:elliptic_regular} and the $L^2$ stability of
  $\Pih$,
  \[
    \Norm{
      \qp{I-\Pih}\phi_j
    }_{L^2(\Omega)}
    \lesssim
    \min\qc{
      1,
      h^2\lambda_j
    }.
  \]
  Hence
  \begin{equation}
    \mathbb E\qb{
      \Norm{
        Y-Y^{(h)}
      }_{L^2(\Omega)}^2
    }
    \lesssim
    \sum_{j=1}^\infty
    \min\qc{
      \lambda_j^{-2},
      h^4
    }.
    \label{eq:spectral_sum_min}
  \end{equation}
  Using \eqref{eq:weyl} and splitting the sum at
  $j\simeq h^{-d}$ gives
  \begin{equation}
    \sum_{j\lesssim h^{-d}}
    h^4
    +
    \sum_{j\gtrsim h^{-d}}
    \lambda_j^{-2}
    \lesssim
    h^{4-d}.
    \label{eq:spectral_sum_bound}
  \end{equation}
  Taking square roots proves \eqref{eq:projected_noise_rate}.
\end{proof}

\begin{proposition}[Optimality over finite-dimensional spaces]
  \label{prop:stochastic_approximation_optimality}
  Let $S_N\subset L^2(\Omega)$ be any deterministic subspace with
  $\dim S_N=N$, and let $Z_N\in L^2(\Xi;S_N)$ be arbitrary.  Then
  \begin{equation}
    \mathbb E
    \Norm{
      Y-Z_N
    }_{L^2(\Omega)}^2
    \geq
    \sum_{j>N}
    \lambda_j^{-2}.
    \label{eq:stochastic_projection_spectral_lower}
  \end{equation}
  Equality is attained when
  \[
    S_N
    =
    \operatorname{span}
    \qc{
      \phi_1,\ldots,\phi_N
    }
  \]
  and $Z_N$ is the $L^2$-orthogonal projection of $Y$ onto $S_N$.

  In particular, taking $S_N=V_h$ and $N=N_h$, under
  Assumption~\ref{ass:poly_mesh},
  \begin{equation}
    \Norm{
      Y-Z_h
    }_{L^2\qp{\Xi;L^2(\Omega)}}
    \gtrsim
    h^{2-d/2}
    \qquad
    \forall Z_h\in L^2(\Xi;V_h).
    \label{eq:stochastic_approximation_lower_rate}
  \end{equation}
\end{proposition}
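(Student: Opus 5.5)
The plan is to reduce the claim to a deterministic statement about the covariance operator of $Y$, and then use the Courant--Fischer (Ky Fan) extremal characterisation of its eigenvalues.

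\emph{Step 1: reduction to orthogonal projections.} Fix $S_N$ and let $P_N$ denote the $L^2$-orthogonal projection onto $S_N$. Pointwise in $\Xi$ we have $\Norm{Y-Z_N}_{L^2(\Omega)}\ge\Norm{Y-P_NY}_{L^2(\Omega)}$, because $Z_N(\omega)\in S_N$. Hence it suffices to bound $\mathbb E\Norm{(I-P_N)Y}_{L^2(\Omega)}^2$ from below. The projection $P_N$ is measurable and bounded, so $P_NY\in L^2(\Xi;S_N)$ is itself admissible.

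\emph{Step 2: covariance identity.} From the series \eqref{eq:spde_spectral_solution}, which converges in $L^2(\Xi;L^2(\Omega))$ by Theorem~\ref{thm:spde_regular}, and the independence of the $\xi_j$, we obtain
\[
  \mathbb E\Norm{(I-P_N)Y}_{L^2(\Omega)}^2
  =\sum_{j}\lambda_j^{-2}\Norm{(I-P_N)\phi_j}_{L^2(\Omega)}^2
  =\operatorname{tr}\qp{(I-P_N)\Lop^{-2}(I-P_N)}.
\]
Write this as $\operatorname{tr}(\Lop^{-2})-\operatorname{tr}(P_N\Lop^{-2}P_N)$; both traces are finite since $\sum_j\lambda_j^{-2}<\infty$ for $d\le3$ by \eqref{eq:weyl}. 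Choosing an orthonormal basis $\{e_i\}_{i=1}^N$ of $S_N$ gives $\operatorname{tr}(P_N\Lop^{-2}P_N)=\sum_{i=1}^N(\Lop^{-2}e_i,e_i)$.

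\emph{Step 3: Ky Fan maximum principle.} This is the key step. For the compact self-adjoint positive operator $\Lop^{-2}$, with eigenvalues $\lambda_1^{-2}\ge\lambda_2^{-2}\ge\cdots$ (ordering the $\lambda_j$ increasingly), Ky Fan's principle gives
\[
  \sum_{i=1}^N(\Lop^{-2}e_i,e_i)\le\sum_{j=1}^N\lambda_j^{-2}
\]
for any orthonormal family $\{e_i\}_{i=1}^N$. If a self-contained argument is wanted, I would prove it directly. Set $w_j:=\sum_i|(e_i,\phi_j)|^2\in[0,1]$; then $\sum_j w_j\le N$ and the left side equals $\sum_j\lambda_j^{-2}w_j$. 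Since the weights $\lambda_j^{-2}$ are nonincreasing, this sum is maximised by putting all mass on the first $N$ indices. Subtracting from $\operatorname{tr}(\Lop^{-2})=\sum_j\lambda_j^{-2}$ yields \eqref{eq:stochastic_projection_spectral_lower}. For the equality case, take $S_N=\operatorname{span}\{\phi_1,\dots,\phi_N\}$: then $(I-P_N)\phi_j=0$ for $j\le N$ and $\phi_j$ for $j>N$, so Step 2 gives exactly $\sum_{j>N}\lambda_j^{-2}$.

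\emph{Step 4: rate.} With $N=N_h\simeq h^{-d}$ from \eqref{eq:dimension_bound}, the lower Weyl bound \eqref{eq:weyl}, $\lambda_j\le C_\lambda j^{2/d}$, gives
\[
  \sum_{j>N_h}\lambda_j^{-2}\gtrsim\sum_{j>N_h}j^{-4/d}\simeq N_h^{1-4/d}\simeq h^{4-d}.
\]
This uses $4/d>1$ for $d\in\{2,3\}$, compares the sum with an integral, and uses $1-4/d<0$ so that the upper bound $N_h\lesssim h^{-d}$ gives the correct direction. Taking square roots proves \eqref{eq:stochastic_approximation_lower_rate}. The main obstacle is Step 3. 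Everything else is bookkeeping, but one must take care to justify the interchange of the series and the expectation, which follows from the $L^2(\Xi;L^2(\Omega))$ convergence.
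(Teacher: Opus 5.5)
Your proposal is correct and follows the paper's proof essentially step for step. The steps are: the pointwise best-approximation reduction to $P_NY$; the trace identity for the covariance operator $\Lop^{-2}$; the variational (Ky Fan) characterisation of its eigenvalues, which you make self-contained via the weights $w_j=\|P_N\phi_j\|_{L^2(\Omega)}^2\in[0,1]$ with $\sum_j w_j\le N$; and the Weyl tail bound combined with $N_h\lesssim h^{-d}$ and $1-4/d<0$.
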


\begin{proof}
  Let $P_N:L^2(\Omega)\to S_N$ be the $L^2$-orthogonal projection.
  Since $Z_N$ takes values in $S_N$,
  \[
    \Norm{
      Y-Z_N
    }_{L^2(\Omega)}^2
    =
    \Norm{
      Y-P_NY
    }_{L^2(\Omega)}^2
    +
    \Norm{
      P_NY-Z_N
    }_{L^2(\Omega)}^2
  \]
  almost surely.  Hence
  \[
    \mathbb E
    \Norm{
      Y-Z_N
    }_{L^2(\Omega)}^2
    \geq
    \mathbb E
    \Norm{
      Y-P_NY
    }_{L^2(\Omega)}^2.
  \]

  The covariance operator of $Y$ is $\Lop^{-2}$, which is trace
  class for $d\in\qc{2,3}$.  Therefore,
  \begin{align*}
    \mathbb E
    \Norm{
      Y-P_NY
    }_{L^2(\Omega)}^2
    &=
    \operatorname{Tr}
    \qp{
      \qp{I-P_N}\Lop^{-2}\qp{I-P_N}
    }
    \\
    &=
    \operatorname{Tr}
    \qp{
      \qp{I-P_N}\Lop^{-2}
    }.
  \end{align*}
  Since $P_N$ has rank $N$, the variational characterisation of the
  eigenvalues of $\Lop^{-2}$ gives
  \[
    \operatorname{Tr}
    \qp{
      \qp{I-P_N}\Lop^{-2}
    }
    \geq
    \sum_{j>N}
    \lambda_j^{-2}.
  \]
  This proves
  \eqref{eq:stochastic_projection_spectral_lower}.  Equality is
  attained by the spectral space
  $\operatorname{span}\qc{\phi_1,\ldots,\phi_N}$.

  Finally, take $S_N=V_h$ and $N=N_h$.  By
  \eqref{eq:weyl},
  \[
    \sum_{j>N_h}
    \lambda_j^{-2}
    \gtrsim
    N_h^{1-4/d}.
  \]
  Since Assumption~\ref{ass:poly_mesh} gives
  $N_h\simeq h^{-d}$,
  \[
    N_h^{1-4/d}
    \simeq
    h^{4-d}.
  \]
  Taking square roots proves
  \eqref{eq:stochastic_approximation_lower_rate}.
\end{proof}

\begin{theorem}[Sharp strong $L^2$ convergence]
  \label{thm:main_spde_convergence}
  Let $d\in\qc{2,3}$ and let $k\ge1$ be fixed, let
  Assumptions~\ref{ass:elliptic_regular} and \ref{ass:poly_mesh} hold
  and let $\gamma$ be sufficiently large as in
  Lemma~\ref{lem:sip_stability}. Then, there exist constants $c,C>0$,
  independent of $h$, such that the solution $Y_h$ of
  \eqref{eq:discrete_spde} satisfies
  \begin{equation}
    \boxed{
      c h^{2-d/2}
      \leq
      \Norm{
        Y-Y_h
      }_{L^2\qp{\Xi;L^2(\Omega)}}
      \leq
      C h^{2-d/2}.
    }
    \label{eq:main_spde_convergence}
  \end{equation}
  Thus the sharp strong $L^2$ rate is $h$ in two dimensions and
  $h^{1/2}$ in three dimensions.
\end{theorem}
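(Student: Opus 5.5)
The lower bound is immediate: $Y_h\in L^2(\Xi;V_h)$, so Proposition~\ref{prop:stochastic_approximation_optimality} with $Z_h=Y_h$ gives $\Norm{Y-Y_h}_{L^2(\Xi;L^2(\Omega))}\gtrsim h^{2-d/2}$. The remaining work is the upper bound. For this I will use the splitting \eqref{eq:basic_error_split} together with the triangle inequality in $L^2(\Xi;L^2(\Omega))$.

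The first term, $Y-Y^{(h)}$, is bounded by $Ch^{2-d/2}$ directly from Lemma~\ref{lem:projected_white_noise}.

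For the second term I work pathwise. Since $\Wh(\omega)\in V_h\subset L^2(\Omega)$, we have $Y^{(h)}=\Lop^{-1}\Wh$. Moreover $Y_h=L_h^{-1}\Wh=L_h^{-1}\Pih\Wh$, because $\Pih\Wh=\Wh$. This is exactly the deterministic setting of Lemma~\ref{lem:deterministic_L2} with $f=\Wh(\omega)$, and the resolvent bound \eqref{eq:resolvent_error} gives almost surely
\[
  \Norm{Y^{(h)}-Y_h}_{L^2(\Omega)}
  =
  \Norm{\qp{\Lop^{-1}-L_h^{-1}\Pih}\Wh}_{L^2(\Omega)}
  \lesssim
  h^2\Norm{\Wh}_{L^2(\Omega)}.
\]
I then square, take expectations and apply \eqref{eq:white_noise_dimension}, namely $\mathbb E\Norm{\Wh}^2=N_h\simeq h^{-d}$. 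This yields $\mathbb E\Norm{Y^{(h)}-Y_h}^2\lesssim h^4h^{-d}=h^{4-d}$, so this term is also $O(h^{2-d/2})$.

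Measurability of $Y_h$ and $Y^{(h)}$ as random variables is not an issue, since both are images of $\Wh$ under deterministic bounded linear maps.

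The main point to verify is that the crude operator-norm bound suffices, that is, that the factor $h^2$ exactly compensates the growth $h^{-d}$ of $\mathbb E\Norm{\Wh}^2$. This relies on quasi-uniformity through $N_h\simeq h^{-d}$ and on Assumption~\ref{ass:elliptic_regular} through the $h^2$ resolvent rate. Both are available as stated, so I do not expect a genuine obstacle. Combining the two terms gives the upper bound with constant $C$ independent of $h$.
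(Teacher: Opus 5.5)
Your proposal is correct and matches the paper's proof essentially step for step. The paper obtains the lower bound from Proposition~\ref{prop:stochastic_approximation_optimality} with $Z_h=Y_h$, and the upper bound from the splitting \eqref{eq:basic_error_split}, Lemma~\ref{lem:projected_white_noise}, and Lemma~\ref{lem:deterministic_L2} applied samplewise to the restricted noise; combined with \eqref{eq:white_noise_dimension}, the $h^2$ resolvent rate offsets the $h^{-d}$ growth, exactly as you argue.
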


\begin{proof}
  The decomposition \eqref{eq:basic_error_split} and
  Lemma~\ref{lem:projected_white_noise} give
  \begin{equation*}
    \Norm{
      Y-Y_h
    }_{L^2\qp{\Xi;L^2(\Omega)}}
    \lesssim
    h^{2-d/2}
    +
    \Norm{
      Y^{(h)}-Y_h
    }_{L^2\qp{\Xi;L^2(\Omega)}}.
  \end{equation*}
  Applying Lemma~\ref{lem:deterministic_L2} samplewise with
  $f=\Wh$, followed by \eqref{eq:white_noise_dimension}, gives
  \begin{align*}
    \Norm{
      Y^{(h)}-Y_h
    }_{L^2\qp{\Xi;L^2(\Omega)}}
    &\lesssim
    h^2
    \Norm{
      \Wh
    }_{L^2\qp{\Xi;L^2(\Omega)}}
    \\
    &\lesssim
    h^{2-d/2}.
  \end{align*}
  This proves the upper bound.  The lower bound follows from
  Proposition~\ref{prop:stochastic_approximation_optimality} with
  $Z_h=Y_h$.
\end{proof}

\section{Extension to a reentrant polygon}
\label{sec:reentrant_polygon}

To examine the impact of geometry, we now replace
Assumption~\ref{ass:elliptic_regular} by the regularity structure
associated with a single reentrant corner.  The deterministic SIPDG
analysis for elliptic problems on polygons with corner singularities
is well established.  In particular, the Dirichlet corner exponent,
low-regularity continuity and consistency of the SIP form, Galerkin
orthogonality, energy quasi-optimality and adjoint-consistent $L^2$
duality are developed in \cite[Remark~2.6; Propositions~5.6--5.7;
  Lemmas~5.12--5.13,
  5.15--5.16]{MullerSchoetzauSchwab2017SIPPolygons}.  We make use of
the associated weighted-space framework.  The points requiring
attention here are instead that the meshes are the star-shaped
polytopes of Assumption~\ref{ass:poly_mesh}, and that the stochastic
analysis requires the endpoint behaviour of the leading corner
singularity.

Throughout this section, the penalty parameter is chosen sufficiently
large for Lemma~\ref{lem:sip_stability} to hold. Let $d=2$ and let
$\Omega$ be a bounded polygon with one reentrant corner $x_\star$ of
interior angle
\begin{equation}
  \omega_\star\in\qp{\pi,2\pi}.
  \label{eq:reentrant_angle}
\end{equation}
All remaining vertices on $\partial\Omega$ are assumed to be convex.
We assume that the meshes are boundary-conforming and that $x_\star$
is a mesh vertex, see Figure \ref{fig:reentrant_corner}.  Since
$\omega_\star\in(\pi,2\pi)$,
\begin{equation}
  \frac{\pi}{\omega_\star}
  \in
  \qp{\frac12,1}.
  \label{eq:corner_exponent}
\end{equation}

\begin{figure}[h!]
  \centering
  \includegraphics[width=0.3\textwidth]{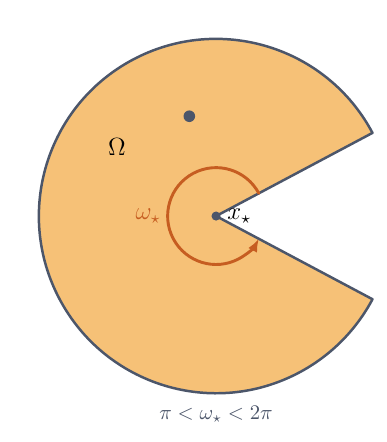}
  \caption{
    Local geometry at the reentrant corner $x_\star$, with interior
    angle $\omega_\star$ and polar coordinates
    $\qp{r_\star,\theta_\star}$.
  }
  \label{fig:reentrant_corner}
\end{figure}

Let $\chi_\star$ be a smooth radial cut-off supported near
$x_\star$ and equal to one in a neighbourhood of $x_\star$, and set
\begin{equation}
  s_\star
  :=
  \chi_\star\qp{r_\star}
  r_\star^{\pi/\omega_\star}
  \sin{
    \frac{\pi\theta_\star}{\omega_\star}
  }.
  \label{eq:corner_singularity}
\end{equation}
Then $s_\star\in H^1_0(\Omega)$,
$\Lop s_\star\in L^2(\Omega)$, and, near $x_\star$,
\begin{equation}
  \abs{
    \nabla^m s_\star
  }
  \lesssim
  r_\star^{\pi/\omega_\star-m},
  \qquad
  m\in\qc{0,1,2}.
  \label{eq:corner_pointwise_behaviour}
\end{equation}

\subsection{Corner structure and deterministic SIP estimates}
\label{subsec:corner_decomposition}

Write
\[
  T:=\Lop^{-1}.
\]
The classical Dirichlet corner decomposition
\cite{Grisvard1985EllipticProblemsNonsmoothDomains,
      Dauge1988EllipticBoundaryValueProblems}
gives bounded maps
\[
  T_{\mathrm{reg}}
  :
  L^2(\Omega)
  \longrightarrow
  H^2(\Omega)\cap H^1_0(\Omega),
  \qquad
  \ell_\star
  \in
  \qp{L^2(\Omega)}^\ast,
\]
such that
\begin{equation}
  Tf
  =
  T_{\mathrm{reg}}f
  +
  \ell_\star(f)s_\star,
  \qquad
  \Norm{
    T_{\mathrm{reg}}f
  }_{H^2(\Omega)}
  +
  \abs{
    \ell_\star(f)
  }
  \lesssim
  \Norm{f}_{L^2(\Omega)}.
  \label{eq:corner_decomposition}
\end{equation}
Let $p_\star\in L^2(\Omega)$ denote the Riesz representative of
$\ell_\star$.

We shall also use
\begin{equation}
  \sigma_j\qp{T_{\mathrm{reg}}}
  \lesssim
  j^{-1},
  \qquad
  j\ge1.
  \label{eq:regular_resolvent_singular_values}
\end{equation}
Indeed, Weyl's law gives
$\sigma_j(T)=\lambda_j^{-1}\lesssim j^{-1}$ in two dimensions,
while $T-T_{\mathrm{reg}}$ has rank one, so
\eqref{eq:regular_resolvent_singular_values} follows from the
finite-rank perturbation inequality for singular values.

Since $\pi/\omega_\star>1/2$, the gradient of $s_\star$ has an
$L^2$ trace on every mesh face.  Hence, for each fixed mesh,
$a_h^\sharp$ extends naturally to
\begin{equation}
  \mathcal V_{h,\star}^\sharp
  :=
  \qp{H^2(\Omega)\cap H^1_0(\Omega)}
  +
  \operatorname{span}\qc{s_\star}
  +
  V_h.
  \label{eq:corner_extended_sip_space}
\end{equation}
The uniform trace estimate required for the approximation analysis is
proved in Lemma~\ref{lem:corner_approximation}.

We may therefore define the SIP Ritz map
\[
  \mathcal G_h:
  \qp{H^2(\Omega)\cap H^1_0(\Omega)}
  +
  \operatorname{span}\qc{s_\star}
  \longrightarrow
  V_h
\]
by
\begin{equation}
  a_h\qp{\mathcal G_hv,v_h}
  =
  a_h^\sharp\qp{v,v_h}
  \qquad
  \forall v_h\in V_h.
  \label{eq:corner_ritz_map}
\end{equation}
The usual consistency argument remains valid.  To see the only
nonstandard point, perform elementwise integration by parts after
removing a circle of radius $r$ about $x_\star$.  On that circle,
\[
  s_\star
  =
  O\qp{r^{\pi/\omega_\star}},
  \qquad
  \partial_ns_\star
  =
  O\qp{r^{\pi/\omega_\star-1}},
\]
and the boundary length is $O(r)$.  The additional contribution
therefore vanishes as $r\downarrow0$.  Consequently,
\begin{equation}
  a_h^\sharp\qp{
    Tf,v_h
  }
  =
  \qp{
    f,v_h
  }_{L^2(\Omega)}
  \qquad
  \forall v_h\in V_h,
  \label{eq:corner_sip_consistency}
\end{equation}
and, with $T_h=L_h^{-1}\Pih$,
\begin{equation}
  T_h
  =
  \mathcal G_hT.
  \label{eq:discrete_resolvent_ritz_identity}
\end{equation}

The following estimates collect the deterministic information needed
for the stochastic analysis.

\begin{lemma}[Corner SIP estimates]
  \label{lem:corner_approximation}
  For fixed polynomial degree $k\ge1$,
  \begin{align}
    \Norm{
      u-\mathcal G_hu
    }_{h,\ast}
    &\lesssim
    h
    \Norm{u}_{H^2(\Omega)},
    &&u\in H^2(\Omega)\cap H^1_0(\Omega),
    \label{eq:regular_ritz_energy}
    \\
    \Norm{
      s_\star-\Pih s_\star
    }_{h,\ast}
    +
    \Norm{
      s_\star-\mathcal G_hs_\star
    }_{h,\ast}
    &\lesssim
    h^{\pi/\omega_\star},
    \label{eq:corner_ritz_energy}
    \\
    \Norm{
      s_\star-\mathcal G_hs_\star
    }_{L^2(\Omega)}
    &\lesssim
    h^{2\pi/\omega_\star}.
    \label{eq:corner_ritz_L2}
  \end{align}
\end{lemma}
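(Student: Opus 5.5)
The proof has three parts, in the order of the three estimates. For \eqref{eq:regular_ritz_energy} I would repeat the energy part of the proof of Lemma~\ref{lem:deterministic_L2}. That argument only used that $u\in H^2(\Omega)\cap H^1_0(\Omega)$, not $u=T f$. With $\eta=u-\Pih u$ and $\xi_h=\Pih u-\mathcal G_hu$, definition \eqref{eq:corner_ritz_map} gives $a_h(\xi_h,v_h)=-a_h^\sharp(\eta,v_h)$. Coercivity, \eqref{eq:extended_sip_continuity}, \eqref{eq:discrete_extended_norm_control} and \eqref{eq:extended_projection_approximation} then give $\dgnorm{h}{\xi_h}\lesssim h\Norm{u}_{H^2(\Omega)}$, and the triangle inequality finishes. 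Nothing here depends on convexity, because the Ritz map is defined through $a_h^\sharp$ directly.

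For \eqref{eq:corner_ritz_energy}, the same quasi-optimality argument shows $\Norm{s_\star-\mathcal G_hs_\star}_{h,\ast}\lesssim\Norm{s_\star-\Pih s_\star}_{h,\ast}$. It therefore suffices to bound the projection error term by term. Split $\mathcal T_h$ into the elements $K$ with $x_\star\in\overline K$ (a uniformly bounded number, by quasi-uniformity and shape regularity) and the rest.

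On elements away from the corner, $s_\star\in H^2(K)$ and \eqref{eq:corner_pointwise_behaviour} gives $\abs{s_\star}_{H^2(K)}^2\lesssim\int_K r_\star^{2\pi/\omega_\star-4}$. Applying \eqref{eq:poly_projection_approximation} and summing the $h_K^2\abs{s_\star}_{H^2(K)}^2$ contributions over the dyadic layers $r_\star\simeq 2^{m}h$ yields
\[
h^2\int_{r_\star>ch}r_\star^{2\pi/\omega_\star-4}\,\mathrm dx\lesssim h^{2\pi/\omega_\star},
\]
since $2\pi/\omega_\star-4+2<0$.

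On the corner elements $s_\star\notin H^2(K)$, so I would instead use a weighted or fractional argument. Since $s_\star\in H^{1+\pi/\omega_\star-\epsilon}$ is too weak, I would argue directly. Take $p_K$ to be the constant $0$, or the value at $x_\star$, which is $0$. Then $\Norm{\nabla s_\star}_{L^2(K)}^2\lesssim\int_0^{h}r^{2\pi/\omega_\star-1}\,\mathrm dr\simeq h^{2\pi/\omega_\star}$, and the inverse estimates control $\Pi_K s_\star$ by $\Norm{s_\star}_{L^2(K)}\lesssim h^{1+\pi/\omega_\star}$.

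The face terms are the main obstacle: the jump terms $h_F^{-1}\Norm{\jump{\cdot}}_{L^2(F)}^2$ and especially the flux average $h_F\Norm{\nabla s_\star\cdot\vec n_F}_{L^2(F)}^2$ on faces touching $x_\star$. For the flux I would compute directly, using $\int_0^{h}r^{2\pi/\omega_\star-2}\,\mathrm dr\simeq h^{2\pi/\omega_\star-1}$, which is finite precisely because $\pi/\omega_\star>1/2$. Multiplied by $h_F$ this gives $h^{2\pi/\omega_\star}$. The difficulty is that faces may be arbitrarily small and numerous, so a face-by-face estimate is not uniform. To handle this, I would bound the sum over all faces of $\partial K$ by $\int_{\partial K}\abs{\nabla s_\star}^2$. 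Decomposing $K$ into the cones $T$ over faces from $x_K$ as in \eqref{eq:botti_geometric_condition}, the pointwise bound $\abs{\nabla s_\star}\lesssim r_\star^{\pi/\omega_\star-1}$ integrated along a boundary of total length $\lesssim h$ is bounded by the radial integral above times a constant. This uses star-shapedness, since each ray from $x_\star$ meets $\partial K$ boundedly often in a controlled way; if that fails I would fall back on a weighted trace inequality on $K$ minus a small ball. The jumps of $s_\star$ vanish on interior faces and on the boundary because $s_\star\in H^1_0$, so only $\Pi_Ks_\star$ contributes, and this is handled by \eqref{eq:poly_trace_inverse}.

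For \eqref{eq:corner_ritz_L2} I would use Aubin--Nitsche duality with the corner decomposition. Set $e=s_\star-\mathcal G_hs_\star$ and $\psi=Te=T_{\mathrm{reg}}e+\ell_\star(e)s_\star$. By \eqref{eq:corner_sip_consistency}, symmetry and Galerkin orthogonality,
\[
\Norm{e}_{L^2(\Omega)}^2=a_h^\sharp(e,\psi-\mathcal G_h\psi).
\]
Bounding with \eqref{eq:extended_sip_continuity}, then \eqref{eq:regular_ritz_energy} for $T_{\mathrm{reg}}e$ and \eqref{eq:corner_ritz_energy} for $s_\star$, together with \eqref{eq:corner_decomposition}, gives
\[
\Norm{e}_{L^2(\Omega)}^2\lesssim h^{\pi/\omega_\star}\qp{h+h^{\pi/\omega_\star}}\Norm{e}_{L^2(\Omega)}\lesssim h^{2\pi/\omega_\star}\Norm{e}_{L^2(\Omega)},
\]
using $\pi/\omega_\star<1$.
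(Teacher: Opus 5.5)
\paragraph{A gap in how you define the corner patch.} Your treatment of the first and third estimates follows the paper's argument, and so do your crude bounds on the corner elements ($p_K=0$, $L^2$-stability, inverse and trace-inverse estimates). Using $\mathcal G_h\psi$ instead of the paper's $\Pih T_{\mathrm{reg}}e_\star+\ell_\star(e_\star)\Pih s_\star$ makes no difference, because $a_h^\sharp(e_\star,\cdot)$ vanishes on $V_h$.

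You separate the elements with $x_\star\in\overline K$ from the rest. You then bound the rest by $h^2\int_{r_\star>ch}r_\star^{2\pi/\omega_\star-4}\,\mathrm dx$. That step needs every element whose closure misses $x_\star$ to satisfy $\operatorname{dist}(K,x_\star)\gtrsim h$, and Assumption~\ref{ass:poly_mesh} does not give this. Faces may be arbitrarily small, so an element can have a vertex $y$ with $\lvert y-x_\star\rvert=\epsilon\ll h$ while $x_\star\notin\overline K$. One example is a tiny face $[x_\star,y]$ shared by two elements that do touch $x_\star$, with a third element sitting at $y$.

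On such an element $s_\star\in H^2(K)$. However, $\lvert D^2s_\star\rvert\simeq r_\star^{\pi/\omega_\star-2}$ near $x_\star$, and $K$ contains a cone of fixed aperture at $y$. Hence $h_K^2\lvert s_\star\rvert_{H^2(K)}^2\gtrsim h^2\epsilon^{2\pi/\omega_\star-2}$, which is unbounded as $\epsilon\to0$, so your bound is not uniform over the mesh family.

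The paper avoids this with the metric patch $\mathcal N_h^\star=\{K:K\cap B(x_\star,h)\neq\emptyset\}$:
\begin{itemize}
\item it still has bounded cardinality by the area argument;
\item every element outside it lies at distance at least $h$ from $x_\star$, so your dyadic-layer sum becomes correct;
\item the patch bounds only use $K\subset B(x_\star,2h)$, never $x_\star\in\overline K$.
\end{itemize}
Your patch argument works unchanged on this larger set. The repair is therefore to change the definition, but as written the step fails.

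\paragraph{The whole-boundary trace of $\nabla s_\star$.} Here your route differs from the paper's. The paper applies the trace inequality of \cite{botti2025trace} with $p=\mu$, $q=2$ to $\nabla s_\star\in W^{1,\mu}(K)$, for some $\mu\in(4/3,2/(2-\pi/\omega_\star))$. Integrating the pointwise bound $\lvert\nabla s_\star\rvert^2\lesssim r_\star^{2\pi/\omega_\star-2}$ over $\partial K$ is a legitimate and more elementary alternative.

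Your justification for it is not correct, though. Star-shapedness is with respect to a ball around $x_K$, not around $x_\star$. A ray from $x_\star$ can cross a shallow sawtooth boundary arbitrarily many times, so bounded crossings per ray does not hold.

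What you actually need is the density bound $\mathcal H^1(\partial K\cap B(x_\star,t))\lesssim t$ for $t\lesssim h$. The cone decomposition you mention gives it:
\begin{itemize}
\item the sub-cones from $x_K$ over the pieces $F_T\cap B(x_\star,t)$ are disjoint;
\item each has area at least $\tfrac12\rho_K\,\mathcal H^1(F_T\cap B(x_\star,t))$ by \eqref{eq:botti_geometric_condition};
\item all of them lie in $\operatorname{conv}(\{x_K\}\cup B(x_\star,t))$, whose area is $\lesssim th$.
\end{itemize}
The layer-cake formula then gives $\int_{\partial K}r_\star^{2\pi/\omega_\star-2}\,\mathrm ds\lesssim h^{2\pi/\omega_\star-1}$, because $2\pi/\omega_\star-2>-1$. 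This is where $\pi/\omega_\star>1/2$ enters, playing the same role as the paper's choice of $\mu$.

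Your route needs only the pointwise gradient bound and an explicit geometric density estimate. The paper's route needs $L^{\mu'}$ and $L^{\mu}$ integrability of $\nabla s_\star$ and $D^2s_\star$, but then reuses an off-the-shelf trace lemma. With the metric patch and this density bound, your proposal becomes a complete proof.
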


\begin{proof}
  The estimate \eqref{eq:regular_ritz_energy} follows from
  \eqref{eq:extended_projection_approximation}, coercivity,
  continuity and Galerkin orthogonality.  We therefore concentrate on
  the singular function $s_\star$.

  Introduce the metric corner patch
  \begin{equation}
    \mathcal N_h^\star
    :=
    \ensemble{
      K\in\mathcal T_h
    }{
      K\cap B\qp{x_\star,h}\neq\emptyset
    }.
    \label{eq:metric_corner_patch}
  \end{equation}
  If $K\in\mathcal N_h^\star$, then
  $K\subset B(x_\star,2h)$.  Moreover,
  Assumption~\ref{ass:poly_mesh},
  \eqref{eq:element_volume_regular} and
  \eqref{eq:quasi_uniform} give
  $\abs{K}\gtrsim h^2$.  Since the element interiors are disjoint,
  \begin{equation}
    \#\mathcal N_h^\star
    \lesssim
    1.
    \label{eq:metric_corner_patch_cardinality}
  \end{equation}
  On the other hand,
  \begin{equation}
    K\notin\mathcal N_h^\star
    \quad\Longrightarrow\quad
    \operatorname{dist}\qp{K,x_\star}
    \geq
    h.
    \label{eq:corner_patch_separation}
  \end{equation}
  These two properties separate the corner contribution from the
  regular part of the mesh.

  We first estimate the contribution from
  $\mathcal N_h^\star$.  Quasi-uniformity implies that, for
  $K\in\mathcal N_h^\star$,
  \[
    K
    \subset
    B\qp{x_\star,Ch_K}
  \]
  with $C$ independent of $h$.  Hence
  \eqref{eq:corner_pointwise_behaviour} gives
  \begin{align}
    \Norm{
      s_\star
    }_{L^2(K)}
    &\lesssim
    h_K^{1+\pi/\omega_\star},
    &
    \Norm{
      \nabla s_\star
    }_{L^2(K)}
    &\lesssim
    h_K^{\pi/\omega_\star}.
    \label{eq:corner_volume_scaling}
  \end{align}

  The only nonstandard estimate is the whole-boundary trace of
  $\nabla s_\star$.  Since
  $\pi/\omega_\star>1/2$, choose an exponent $\mu$ such that
  \begin{equation}
    \frac43
    <
    \mu
    <
    \frac{
      2
    }{
      2-\pi/\omega_\star
    },
    \qquad
    \mu'
    :=
    \frac{\mu}{\mu-1}.
    \label{eq:corner_trace_exponent}
  \end{equation}
  Such a choice is possible precisely because
  $\pi/\omega_\star>1/2$.  The pointwise behaviour
  \eqref{eq:corner_pointwise_behaviour} then gives
  \begin{align}
    \Norm{
      \nabla s_\star
    }_{L^{\mu'}(K)}
    &\lesssim
    h_K^{
      \pi/\omega_\star-1+2/\mu'
    },
    \label{eq:corner_gradient_mup}
    \\
    \Norm{
      D^2s_\star
    }_{L^\mu(K)}
    &\lesssim
    h_K^{
      \pi/\omega_\star-2+2/\mu
    }.
    \label{eq:corner_hessian_mu}
  \end{align}
  In particular,
  $\nabla s_\star\in[W^{1,\mu}(K)]^2$.

  Apply \cite[Lemma~2.2]{botti2025trace} with $p=\mu$ and
  $q=2$ to each component of $\nabla s_\star$.  The mesh condition
  required there holds uniformly by
  \eqref{eq:botti_geometric_condition}.  Since
  $\abs{K}\simeq h_K^2$, we obtain
  \begin{align*}
    \Norm{
      \nabla s_\star
    }_{L^2(\partial K)}^2
    &\lesssim
    h_K^{-1}
    \abs{K}^{1-2/\mu'}
    \Norm{
      \nabla s_\star
    }_{L^{\mu'}(K)}^2
    \\
    &\qquad
    +
    \Norm{
      D^2s_\star
    }_{L^\mu(K)}
    \Norm{
      \nabla s_\star
    }_{L^{\mu'}(K)}
    \\
    &\lesssim
    h_K^{2\pi/\omega_\star-1}.
  \end{align*}
  Thus
  \begin{equation}
    \Norm{
      \nabla s_\star
    }_{L^2(\partial K)}
    \lesssim
    h_K^{\pi/\omega_\star-1/2}.
    \label{eq:corner_whole_boundary_gradient_trace}
  \end{equation}
  Notice that this is a whole-boundary estimate; no lower bound on
  the size of an individual face is used.

  Applying \eqref{eq:poly_trace} directly to $s_\star$ and using
  \eqref{eq:corner_volume_scaling} similarly gives
  \begin{equation}
    \Norm{
      s_\star
    }_{L^2(\partial K)}
    \lesssim
    h_K^{\pi/\omega_\star+1/2}.
    \label{eq:corner_whole_boundary_value_trace}
  \end{equation}

  Let $\Pi_K$ denote the local $L^2$ projection.  Its $L^2$
  stability, together with the polynomial inverse and trace estimates
  of Lemma~\ref{lem:poly_trace_inverse}, now yields
  \begin{equation}
    \begin{split}
      &
      \Norm{
        s_\star-\Pi_Ks_\star
      }_{L^2(K)}
      +
      h_K
      \Norm{
        \nabla\qp{s_\star-\Pi_Ks_\star}
      }_{L^2(K)}
      \\
      &\qquad
      +
      h_K^{1/2}
      \Norm{
        s_\star-\Pi_Ks_\star
      }_{L^2(\partial K)}
      +
      h_K^{3/2}
      \Norm{
        \nabla\qp{s_\star-\Pi_Ks_\star}
      }_{L^2(\partial K)}
      \lesssim
      h_K^{1+\pi/\omega_\star}
    \end{split}
    \label{eq:corner_patch_scaling}
  \end{equation}
  for every $K\in\mathcal N_h^\star$.

  We next consider the elements outside the metric corner patch.
  By \eqref{eq:corner_patch_separation}, the singularity is separated
  from each such element by at least $h$.  Hence
  \eqref{eq:poly_projection_approximation} and
  \eqref{eq:corner_pointwise_behaviour} give, for a fixed radius
  $R>0$ containing the support of $\chi_\star$,
  \begin{align*}
    \sum_{K\notin\mathcal N_h^\star}
    h_K^2
    \Norm{
      s_\star
    }_{H^2(K)}^2
    &\lesssim
    h^2
    \left(
      1
      +
      \int_h^R
      r^{2\pi/\omega_\star-3}
      \,\mathrm dr
    \right)
    \\
    &\lesssim
    h^{2\pi/\omega_\star}.
  \end{align*}
  Here the fixed term accounts for the smooth cut-off region.
  Combining this estimate with
  \eqref{eq:corner_patch_scaling},
  \eqref{eq:metric_corner_patch_cardinality} and the definition of
  $\Norm{\cdot}_{h,\ast}$ gives
  \begin{equation}
    \Norm{
      s_\star-\Pih s_\star
    }_{h,\ast}
    \lesssim
    h^{\pi/\omega_\star}.
    \label{eq:corner_projection_energy}
  \end{equation}

  To obtain the corresponding Ritz estimate, set
  \[
    \eta
    :=
    s_\star-\Pih s_\star,
    \qquad
    \xi_h
    :=
    \Pih s_\star-\mathcal G_hs_\star.
  \]
  Consistency and Galerkin orthogonality imply
  \[
    a_h\qp{
      \xi_h,v_h
    }
    =
    -
    a_h^\sharp\qp{
      \eta,v_h
    }
    \qquad
    \forall v_h\in V_h.
  \]
  Taking $v_h=\xi_h$ and using coercivity,
  \eqref{eq:extended_sip_continuity} and
  \eqref{eq:discrete_extended_norm_control} gives
  \[
    \Norm{
      \xi_h
    }_{h,\ast}
    \lesssim
    \Norm{
      \eta
    }_{h,\ast}.
  \]
  Together with \eqref{eq:corner_projection_energy}, this proves
  \eqref{eq:corner_ritz_energy}.

  It remains to prove the $L^2$ estimate.  Set
  \[
    e_\star
    :=
    s_\star-\mathcal G_hs_\star,
    \qquad
    \psi
    :=
    Te_\star
    =
    T_{\mathrm{reg}}e_\star
    +
    \ell_\star(e_\star)s_\star,
  \]
  and choose
  \[
    \psi_h
    :=
    \Pih T_{\mathrm{reg}}e_\star
    +
    \ell_\star(e_\star)\Pih s_\star.
  \]
  The regular approximation estimate,
  \eqref{eq:corner_projection_energy} and
  \eqref{eq:corner_decomposition} imply
  \begin{align*}
    \Norm{
      \psi-\psi_h
    }_{h,\ast}
    &\lesssim
    h
    \Norm{
      T_{\mathrm{reg}}e_\star
    }_{H^2(\Omega)}
    +
    h^{\pi/\omega_\star}
    \abs{
      \ell_\star(e_\star)
    }
    \\
    &\lesssim
    h^{\pi/\omega_\star}
    \Norm{
      e_\star
    }_{L^2(\Omega)},
  \end{align*}
  where we used $h\lesssim h^{\pi/\omega_\star}$ for $0<h\le1$.

  Finally, symmetry, consistency and Galerkin orthogonality give
  \begin{align*}
    \Norm{
      e_\star
    }_{L^2(\Omega)}^2
    &=
    a_h^\sharp\qp{
      e_\star,\psi-\psi_h
    }
    \\
    &\lesssim
    \Norm{
      e_\star
    }_{h,\ast}
    \Norm{
      \psi-\psi_h
    }_{h,\ast}
    \\
    &\lesssim
    h^{2\pi/\omega_\star}
    \Norm{
      e_\star
    }_{L^2(\Omega)}.
  \end{align*}
  This proves \eqref{eq:corner_ritz_L2}.
\end{proof}

\subsection{Hilbert--Schmidt resolvent error}
\label{subsec:corner_hilbert_schmidt}

The preceding estimates still reflect the deterministic corner loss.
For the SPDE, however, the singular contribution in
\eqref{eq:corner_decomposition} is only rank one.  This allows the
regular infinite-dimensional part and the corner correction to be
estimated separately.

\begin{lemma}[Reentrant-corner resolvent estimate]
  \label{lem:corner_hilbert_schmidt}
  The continuous and discrete resolvents satisfy
  \begin{equation}
    \Norm{
      T-T_h
    }_{\mathcal L_2\qp{L^2(\Omega)}}
    \lesssim
    h.
    \label{eq:corner_hilbert_schmidt}
  \end{equation}
\end{lemma}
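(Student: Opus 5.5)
Split $T-T_h$ through the Ritz identity \eqref{eq:discrete_resolvent_ritz_identity} and the corner decomposition \eqref{eq:corner_decomposition}:
\[
  T-T_h
  =
  (I-\mathcal G_h)T
  =
  (I-\mathcal G_h)T_{\mathrm{reg}}
  +
  \qp{s_\star-\mathcal G_hs_\star}\otimes p_\star,
\]
where the second term is the rank-one operator $f\mapsto\ell_\star(f)\qp{s_\star-\mathcal G_hs_\star}$. Its Hilbert--Schmidt norm equals $\Norm{p_\star}_{L^2(\Omega)}\Norm{s_\star-\mathcal G_hs_\star}_{L^2(\Omega)}\lesssim h^{2\pi/\omega_\star}\le h$ by \eqref{eq:corner_ritz_L2}, since $2\pi/\omega_\star>1$. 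So the corner term is harmless, and the whole task is to show $\Norm{(I-\mathcal G_h)T_{\mathrm{reg}}}_{\mathcal L_2}\lesssim h$.

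For the regular part, I would first prove the operator-norm bound
\[
  \Norm{(I-\mathcal G_h)T_{\mathrm{reg}}}_{\mathcal L(L^2)}\lesssim h^{2\pi/\omega_\star}\ (\text{or at least } \lesssim h).
\]
This follows by an Aubin--Nitsche duality argument identical to the proof of \eqref{eq:corner_ritz_L2}. With $u=T_{\mathrm{reg}}f$ and $e=u-\mathcal G_hu$, set $\psi=Te$ and approximate it by $\psi_h=\Pih T_{\mathrm{reg}}e+\ell_\star(e)\Pih s_\star$. Combining \eqref{eq:regular_ritz_energy}, \eqref{eq:corner_projection_energy} and \eqref{eq:corner_decomposition} gives
\[
  \Norm{e}_{L^2}^2\lesssim h\cdot h^{\pi/\omega_\star}\Norm{u}_{H^2}\Norm{e}_{L^2}.
\]
Hence the operator norm is $\lesssim h^{1+\pi/\omega_\star}$, and in particular $\lesssim h^{3/2}$.

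Next, combine this with the singular-value decay \eqref{eq:regular_resolvent_singular_values}. Write $A:=I-\mathcal G_h$ restricted to the range of $T_{\mathrm{reg}}$, and let $\{\sigma_j,u_j,v_j\}$ be the singular system of $T_{\mathrm{reg}}$. Then
\[
  \Norm{AT_{\mathrm{reg}}}_{\mathcal L_2}^2=\sum_j\sigma_j^2\Norm{Au_j}_{L^2}^2 .
\]
For the individual terms, $\Norm{Au_j}_{L^2}\sigma_j=\Norm{(I-\mathcal G_h)T_{\mathrm{reg}}v_j}_{L^2}\lesssim h^{1+\pi/\omega_\star}$. In addition, the $L^2$ stability of $I-\mathcal G_h$ on $H^2$ functions gives the trivial bound $\lesssim\Norm{T_{\mathrm{reg}}v_j}_{L^2}=\sigma_j$. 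Together,
\[
  \sum_j\min\{\sigma_j^2,h^{2+2\pi/\omega_\star}\}.
\]
Summing with $\sigma_j\lesssim j^{-1}$ and splitting at $j\simeq h^{-(1+\pi/\omega_\star)}$ gives $\lesssim h^{1+\pi/\omega_\star}\le h^{3/2}$. This is even better than needed, which makes me suspicious, so I would double-check the $L^2$ stability claim, $\Norm{(I-\mathcal G_h)u}_{L^2}\lesssim\Norm{u}_{L^2}$ for $u=T_{\mathrm{reg}}v$. That claim is false in general, since $\mathcal G_h$ is not $L^2$-bounded.

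The safer route mirrors Lemma~\ref{lem:projected_white_noise}. Write
\[
  (I-\mathcal G_h)T_{\mathrm{reg}}=(I-\Pih)T_{\mathrm{reg}}+(\Pih-\mathcal G_h)T_{\mathrm{reg}},
\]
and bound each piece in Hilbert--Schmidt norm.
\begin{itemize}
  \item For the first piece, $\Norm{(I-\Pih)T_{\mathrm{reg}}v_j}_{L^2}\lesssim\min\{\sigma_j,h^2\}$, using \eqref{eq:L2_projection_H2} and $L^2$ stability of $\Pih$. Summing $\min\{j^{-2},h^4\}$ gives $h^2\lesssim h^2$; more honestly, splitting at $j\simeq h^{-2}$ gives $h^{-2}h^4+h^{2}$, so this piece is $\lesssim h$ in HS norm.
  \item For the second piece, $(\Pih-\mathcal G_h)T_{\mathrm{reg}}$ has range in $V_h$, of dimension $N_h\simeq h^{-2}$. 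Hence its HS norm is at most $N_h^{1/2}$ times its operator norm. The operator norm is bounded by $\Norm{(I-\Pih)T_{\mathrm{reg}}}+\Norm{(I-\mathcal G_h)T_{\mathrm{reg}}}\lesssim h^2+h^{1+\pi/\omega_\star}$. The product is therefore $\lesssim h^{\pi/\omega_\star}$, which is not enough.
\end{itemize}

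\textbf{Main obstacle.} The second piece is where the difficulty lies: the duality bound loses because the dual problem sees the singularity. I would try first to sharpen the duality step by splitting $\psi$ further. The regular part of $\psi$ contributes $h\cdot h$. The singular part contributes $\ell_\star(e)\,a_h^\sharp(e,s_\star-\mathcal G_hs_\star)$, where
\[
  \ell_\star(e)=(p_\star,e).
\]
This gives
\[
  (I-\mathcal G_h)T_{\mathrm{reg}}=E_{\mathrm{good}}+\text{rank-one},
\]
where the good part has operator norm $\lesssim h^2$, which yields HS norm $\lesssim N_h^{1/2}h^2\simeq h$ on the $V_h$ range component. The rank-one remainder is the map $f\mapsto(p_\star,e)\,\times$ (a fixed function), with HS norm $\lesssim h\cdot h^{\pi/\omega_\star}\le h$.

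Making this rank-one extraction rigorous is the core of the proof. Concretely, one writes $e=(I-\mathcal G_h)T_{\mathrm{reg}}f$ and tests the identity $\Norm{e}^2=a_h^\sharp(e,\psi-\psi_h)$ against a general $g$ instead of $e$, which gives the representation of the adjoint error operator.
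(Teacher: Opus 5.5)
Your final argument is correct. It shares the paper's key step: testing the duality identity against an arbitrary $g$ gives $u-\mathcal G_hu=B_hu+q_h(u)p_\star$, with $\|B_hT_{\mathrm{reg}}\|_{\mathcal L(L^2)}\lesssim h^2$ and $|q_h(T_{\mathrm{reg}}f)|\lesssim h^{1+\pi/\omega_\star}\|f\|_{L^2}$. One correction: the rank-one term is $q_h(T_{\mathrm{reg}}f)\,p_\star$, with fixed function $p_\star$ and coefficient $q_h(T_{\mathrm{reg}}f)=a_h^\sharp(e,s_\star-\Pih s_\star)$. Your ``$(p_\star,e)\times$ fixed function'' comes from reading the quadratic identity for $\|e\|^2$ as if it were linear in $e$; the size is unaffected.

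Where you differ is the Hilbert--Schmidt bound for the infinite-rank part. The paper treats $B_hT_{\mathrm{reg}}$ as a whole. Since $B_hT_{\mathrm{reg}}-T_{\mathrm{reg}}$ has rank at most $N_h+1$, the singular-value perturbation inequality gives $\sigma_{N_h+1+j}(B_hT_{\mathrm{reg}})\le\sigma_j(T_{\mathrm{reg}})\lesssim j^{-1}$. With the operator-norm bound this yields $N_hh^4+\sum_j\min\{h^4,j^{-2}\}\lesssim h^2$.

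You instead split orthogonally with $\Pih$. Since $\mathcal G_h$ maps into $V_h$,
\[
  (I-\mathcal G_h)T_{\mathrm{reg}}
  =
  (I-\Pih)T_{\mathrm{reg}}
  +
  \Pih B_hT_{\mathrm{reg}}
  +
  q_h(T_{\mathrm{reg}}\,\cdot\,)\,\Pih p_\star .
\]
The first term is your first bullet, with Hilbert--Schmidt norm $\lesssim h$. The second has rank at most $N_h$ and operator norm $\lesssim h^2$, hence Hilbert--Schmidt norm $\lesssim N_h^{1/2}h^2\simeq h$. The third is rank one of size $h^{1+\pi/\omega_\star}$.

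You should write this identity out explicitly. Your phrase ``on the $V_h$ range component'' is only justified because $(I-\Pih)(I-\mathcal G_h)T_{\mathrm{reg}}=(I-\Pih)T_{\mathrm{reg}}$. That identity is what ensures the part of $B_hT_{\mathrm{reg}}$ off $V_h$ is already covered by the first bullet; $B_hT_{\mathrm{reg}}$ itself is not finite rank.

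The two routes produce the same two sums. Yours replaces the interlacing step by an explicit decomposition that mirrors the forcing-restriction/discretisation split of Lemma~\ref{lem:projected_white_noise}. The paper's is shorter and never separates the $V_h$ and $V_h^\perp$ components. Both rely on $\sigma_j(T_{\mathrm{reg}})\lesssim j^{-1}$, which the paper derives from the same kind of finite-rank perturbation argument.

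In a write-up, drop the discarded attempts, in particular the false $L^2$-stability of $\mathcal G_h$.
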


\begin{proof}
  Let $u\in H^2(\Omega)\cap H^1_0(\Omega)$ and set
  \[
    e_u:=u-\mathcal G_hu.
  \]
  For $g\in L^2(\Omega)$, adjoint consistency gives
  \[
    \qp{
      e_u,g
    }_{L^2(\Omega)}
    =
    a_h^\sharp\qp{
      e_u,Tg
    }.
  \]
  Using the corner decomposition
  \eqref{eq:corner_decomposition} and the Galerkin orthogonality
  \[
    a_h^\sharp\qp{
      e_u,v_h
    }
    =
    0
    \qquad
    \forall v_h\in V_h,
  \]
  we obtain
  \[
    \qp{
      e_u,g
    }_{L^2(\Omega)}
    =
    a_h^\sharp\qp{
      e_u,
      T_{\mathrm{reg}}g-\Pih T_{\mathrm{reg}}g
    }
    +
    \ell_\star(g)
    a_h^\sharp\qp{
      e_u,
      s_\star-\Pih s_\star
    }.
  \]

  The first term on the right is a bounded linear functional of $g$.
  Hence, by the Riesz representation theorem, there exists a unique
  $B_hu\in L^2(\Omega)$ such that
  \[
    \qp{
      B_hu,g
    }_{L^2(\Omega)}
    :=
    a_h^\sharp\qp{
      e_u,
      T_{\mathrm{reg}}g-\Pih T_{\mathrm{reg}}g
    }.
  \]
  Set also
  \[
    q_h(u)
    :=
    a_h^\sharp\qp{
      e_u,
      s_\star-\Pih s_\star
    }.
  \]
  Lemma~\ref{lem:corner_approximation} and
  \eqref{eq:corner_decomposition} give
  \begin{equation}
    \Norm{
      B_hu
    }_{L^2(\Omega)}
    \lesssim
    h^2
    \Norm{u}_{H^2(\Omega)},
    \qquad
    \abs{
      q_h(u)
    }
    \lesssim
    h^{1+\pi/\omega_\star}
    \Norm{u}_{H^2(\Omega)}.
    \label{eq:corner_regular_error_bounds}
  \end{equation}

  Let $p_\star\in L^2(\Omega)$ be the Riesz representative of
  $\ell_\star$, so that
  \[
    \ell_\star(g)
    =
    \qp{
      g,p_\star
    }_{L^2(\Omega)}.
  \]
  Since the preceding identity holds for every $g\in L^2(\Omega)$,
  \[
    u-\mathcal G_hu
    =
    B_hu
    +
    q_h(u)p_\star.
  \]
  Applying this with $u=T_{\mathrm{reg}}f$ and using
  \eqref{eq:corner_decomposition} together with
  \eqref{eq:discrete_resolvent_ritz_identity}, we obtain
  \begin{equation}
    \qp{T-T_h}f
    =
    B_hT_{\mathrm{reg}}f
    +
    q_h\qp{T_{\mathrm{reg}}f}p_\star
    +
    \ell_\star(f)
    \qp{
      s_\star-\mathcal G_hs_\star
    }.
    \label{eq:corner_resolvent_operator_decomposition}
  \end{equation}

  We first consider the infinite-dimensional term
  $B_hT_{\mathrm{reg}}$.  By
  \eqref{eq:corner_regular_error_bounds},
  \[
    \Norm{
      B_hT_{\mathrm{reg}}
    }_{\mathcal L\qp{L^2(\Omega)}}
    \lesssim
    h^2.
  \]
  From
  \[
    u-\mathcal G_hu
    =
    B_hu+q_h(u)p_\star
  \]
  we have, for every $f\in L^2(\Omega)$,
  \[
    \qp{
      B_hT_{\mathrm{reg}}-T_{\mathrm{reg}}
    }f
    =
    -\mathcal G_hT_{\mathrm{reg}}f
    -
    q_h\qp{T_{\mathrm{reg}}f}p_\star.
  \]
  The first term takes values in $V_h$, while the second takes values
  in $\operatorname{span}\qc{p_\star}$.  Consequently,
  \[
    \operatorname{rank}
    \qp{
      B_hT_{\mathrm{reg}}-T_{\mathrm{reg}}
    }
    \leq
    N_h+1.
  \]
    We use the standard finite-rank perturbation inequality for singular
  values: if $A$ and $B$ are compact operators and
  \[
    \operatorname{rank}\qp{A-B}\leq r,
  \]
  then
  \[
    \sigma_{r+j}\qp{A}
    \leq
    \sigma_j\qp{B},
    \qquad
    j\geq1.
  \]
  Applying this with
  \[
    A=B_hT_{\mathrm{reg}},
    \qquad
    B=T_{\mathrm{reg}},
    \qquad
    r=N_h+1,
  \]
  and using \eqref{eq:regular_resolvent_singular_values}, we obtain
  \[
    \sigma_{N_h+1+j}
    \qp{
      B_hT_{\mathrm{reg}}
    }
    \leq
    \sigma_j\qp{T_{\mathrm{reg}}}
    \lesssim
    j^{-1},
    \qquad
    j\geq1.
  \]
  Combining this with the operator-norm bound above and
  $N_h\simeq h^{-2}$ yields
  \begin{align}
    \Norm{
      B_hT_{\mathrm{reg}}
    }_{\mathcal L_2\qp{L^2(\Omega)}}^2
    &\lesssim
    N_hh^4
    +
    \sum_{j=1}^\infty
    \min\qc{
      h^4,j^{-2}
    }
    \lesssim
    h^2.
    \label{eq:BhTreg_hilbert_schmidt}
  \end{align}
  Hence
  \[
    \Norm{
      B_hT_{\mathrm{reg}}
    }_{\mathcal L_2\qp{L^2(\Omega)}}
    \lesssim
    h.
  \]

  The remaining two terms in
  \eqref{eq:corner_resolvent_operator_decomposition} have rank one.
  Using \eqref{eq:corner_regular_error_bounds},
  \eqref{eq:corner_decomposition} and
  \eqref{eq:corner_ritz_L2}, we obtain
  \[
    \Norm{
      f\longmapsto
      q_h\qp{T_{\mathrm{reg}}f}p_\star
    }_{\mathcal L_2\qp{L^2(\Omega)}}
    \lesssim
    h^{1+\pi/\omega_\star},
  \]
  and
  \[
    \Norm{
      f\longmapsto
      \ell_\star(f)
      \qp{
        s_\star-\mathcal G_hs_\star
      }
    }_{\mathcal L_2\qp{L^2(\Omega)}}
    \lesssim
    h^{2\pi/\omega_\star}.
  \]
  Since
  \[
    \frac{\pi}{\omega_\star}>\frac12,
  \]
  both exponents are strictly larger than one.  Thus the two
  rank-one corner terms are of higher order than the $O(h)$
  Hilbert--Schmidt contribution from $B_hT_{\mathrm{reg}}$, and
  \eqref{eq:corner_hilbert_schmidt} follows.
\end{proof}

\begin{theorem}[Sharp convergence on a reentrant polygon]
  \label{thm:reentrant_spde_convergence}
  Let $d=2$ and let $\Omega$ have the corner geometry described
  above.  Under Assumption~\ref{ass:poly_mesh}, there exist constants
  $c,C>0$, independent of $h$, such that
  \begin{equation}
    \boxed{
      ch
      \leq
      \Norm{
        Y-Y_h
      }_{L^2\qp{\Xi;L^2(\Omega)}}
      \leq
      Ch.
    }
    \label{eq:reentrant_spde_convergence}
  \end{equation}
\end{theorem}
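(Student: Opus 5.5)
The upper bound will follow from an exact representation of the error as a Hilbert--Schmidt norm, combined with Lemma~\ref{lem:corner_hilbert_schmidt}. Since $Y_h=L_h^{-1}\Wh$ and $\Wh$ is the Riesz representative of $\mathcal W|_{V_h}$, we have $Y_h=T_h$ applied to the white noise, in the sense that $(Y_h,\phi)=\mathcal W(T_h^\ast\phi)$. Here we use that $T_h=L_h^{-1}\Pih$ is self-adjoint on $L^2(\Omega)$, because $L_h$ is symmetric on $V_h$ and $\Pih$ is the orthogonal projection. Likewise $(Y,\phi_j)=\lambda_j^{-1}\xi_j=\mathcal W(T\phi_j)$. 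For any orthonormal basis $\{e_i\}$ of $L^2(\Omega)$ we therefore get $(Y-Y_h,e_i)_{L^2(\Omega)}=\mathcal W\qp{(T-T_h)e_i}$. Parseval's identity and the isonormal covariance \eqref{eq:white_noise_covariance} then give
\[
  \mathbb E\qb{\Norm{Y-Y_h}_{L^2(\Omega)}^2}
  =
  \sum_i\Norm{(T-T_h)e_i}_{L^2(\Omega)}^2
  =
  \Norm{T-T_h}_{\mathcal L_2\qp{L^2(\Omega)}}^2 .
\]
This step needs a short justification. One way is to interchange sum and expectation by monotone convergence. Another is to split $Y-Y_h=(Y-Y^{(h)})+(Y^{(h)}-Y_h)$ as in \eqref{eq:basic_error_split} and note that both pieces are linear functionals of $\mathcal W$ given by well-defined operators. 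Lemma~\ref{lem:corner_hilbert_schmidt} then yields $\Norm{Y-Y_h}_{L^2(\Xi;L^2(\Omega))}\lesssim h$ directly, with no recourse to Assumption~\ref{ass:elliptic_regular}.

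An alternative route to the upper bound, which I would mention as a cross-check, goes through the same split. The term $T_h-\mathcal G_h T\Pih$ vanishes, so it contributes nothing. The restriction term has spectral error $\sum_j\lambda_j^{-2}\Norm{(I-\Pih)\phi_j}^2$, which one would control using $\phi_j=\lambda_j T\phi_j$ and the corner decomposition. The Hilbert--Schmidt identity is cleaner, however, so I would use it.

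The lower bound is independent of the geometry. Proposition~\ref{prop:stochastic_approximation_optimality} uses only Weyl's law \eqref{eq:weyl}, which holds on any bounded Lipschitz polygon, together with $N_h\simeq h^{-2}$ from Assumption~\ref{ass:poly_mesh}. Taking $Z_h=Y_h\in L^2(\Xi;V_h)$ gives $\mathbb E\Norm{Y-Y_h}^2\ge\sum_{j>N_h}\lambda_j^{-2}\gtrsim N_h^{-1}\simeq h^2$. Taking square roots gives the lower bound $ch$.

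The main point requiring care is the upper-bound identity. We must verify that $Y$ is still well defined in $L^2(\Xi;L^2(\Omega))$ on the reentrant domain, which follows because $T$ is Hilbert--Schmidt by Weyl's law and Theorem~\ref{thm:spde_regular} needs only \eqref{eq:weyl}. We must also verify that $T_h$ is self-adjoint, so that the discrete solution is exactly $T_h$ applied to the white noise rather than its adjoint. Once these are checked, the theorem is immediate from Lemma~\ref{lem:corner_hilbert_schmidt} and Proposition~\ref{prop:stochastic_approximation_optimality}.
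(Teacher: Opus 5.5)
Your proof is correct and is essentially the paper's. You identify $\Norm{Y-Y_h}_{L^2\qp{\Xi;L^2(\Omega)}}$ with $\Norm{T-T_h}_{\mathcal L_2\qp{L^2(\Omega)}}$ through the common isonormal process, and you make explicit the self-adjointness of $T_h=L_h^{-1}\Pih$ and the monotone-convergence interchange that the paper leaves implicit. You then apply Lemma~\ref{lem:corner_hilbert_schmidt} for the upper bound and Proposition~\ref{prop:stochastic_approximation_optimality} with $N_h\simeq h^{-2}$ for the lower bound.

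Drop the ``cross-check'' paragraph, though. The identity $T_h=\mathcal G_hT\Pih$ holds trivially, but the deterministic piece $Y^{(h)}-Y_h$ corresponds to $(I-\mathcal G_h)T\Pih\neq0$. Bounding that piece as in Theorem~\ref{thm:main_spde_convergence} uses an operator norm of order $h^{2\pi/\omega_\star}$ times $\qp{\mathbb E\Norm{\Wh}_{L^2(\Omega)}^2}^{1/2}\simeq h^{-1}$, which only gives $h^{2\pi/\omega_\star-1}$. That is exactly the loss the rank-one decomposition in the lemma avoids.
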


\begin{proof}
  Let $\qc{e_j}_{j\ge1}$ be any $L^2(\Omega)$-orthonormal basis and
  set $\zeta_j:=\mathcal W(e_j)$.  Since the continuous and discrete
  problems are driven by the same isonormal process,
  \[
    Y
    =
    \sum_{j=1}^\infty
    \zeta_jTe_j,
    \qquad
    Y_h
    =
    \sum_{j=1}^\infty
    \zeta_jT_he_j,
  \]
  and therefore
  \begin{equation}
    \Norm{
      Y-Y_h
    }_{L^2\qp{\Xi;L^2(\Omega)}}
    =
    \Norm{
      T-T_h
    }_{\mathcal L_2\qp{L^2(\Omega)}}.
    \label{eq:strong_error_hilbert_schmidt}
  \end{equation}
  Lemma~\ref{lem:corner_hilbert_schmidt} gives the upper bound.
  The lower bound follows from
  Proposition~\ref{prop:stochastic_approximation_optimality}, since
  $Y_h$ takes values in $V_h$ and $N_h\simeq h^{-2}$.
\end{proof}

\begin{remark}[Why the corner does not change the stochastic rate]
  \label{rem:corner_rate_mechanism}
  A global deterministic regularity argument only sees
  $H^{1+\pi/\omega_\star-\varepsilon}(\Omega)$ regularity and hence
  reflects the loss caused by the reentrant corner.  The operator
  decomposition
  \eqref{eq:corner_resolvent_operator_decomposition} is sharper.
  The infinite-dimensional regular contribution has
  Hilbert--Schmidt error $O(h)$, whereas the two corner corrections
  are rank one and have orders
  \[
    O\qp{
      h^{1+\pi/\omega_\star}
    }
    \qquad\text{and}\qquad
    O\qp{
      h^{2\pi/\omega_\star}
    }.
  \]
  Both are $o(h)$ because $\omega_\star<2\pi$.  Thus the reentrant
  corner degrades deterministic approximation rates without changing
  the sharp first-order strong rate of the white-noise problem.
\end{remark}

\section{Numerical experiments}
\label{sec:numerics}

The numerical experiments examine the two main convergence results.
On the unit square and cube, the explicit Dirichlet eigensystem is
used to estimate the continuum strong error and test the
dimension-dependent rate in
Theorem~\ref{thm:main_spde_convergence}.  We then consider a family
of reentrant polygonal domains with varying corner angle and test the
first-order stochastic rate of
Theorem~\ref{thm:reentrant_spde_convergence} using a coupled
fine-level reference.

The numerical implementation is based on the polytopic discontinuous
Galerkin finite element library \cite{evans2026reyna}. Throughout,
$\kappa=4$ and $V_h$ consists of discontinuous piecewise-linear
polynomials.  We take $\gamma=14$ in two dimensions and $\gamma=24$ in
three dimensions.  Linear systems are solved by ILU-preconditioned
BiCGSTAB with relative tolerance $2\times10^{-9}$.  For each mesh, the
SIP matrix and preconditioner are assembled once and reused for all
modal right-hand sides or stochastic realisations.

\subsection{Continuum strong-error estimator on $(0,1)^d$}
\label{subsec:numerics_setup}

For $\Omega=(0,1)^d$, the eigensystem of $\Lop$ is
\[
  \phi_{\boldsymbol n}(x)
  =
  2^{d/2}
  \prod_{i=1}^d
  \sin{\pi n_i x_i},
  \qquad
  \lambda_{\boldsymbol n}
  =
  \pi^2\abs{\boldsymbol n}_2^2+\kappa^2,
  \qquad
  \boldsymbol n\in\mathbb N^d.
\]
For each mode, let $u_{h,\boldsymbol n}\in V_h$ satisfy
\[
  a_h\qp{
    u_{h,\boldsymbol n},v_h
  }
  =
  \qp{
    \phi_{\boldsymbol n},v_h
  }_{L^2(\Omega)}
  \qquad
  \forall v_h\in V_h.
\]
Expanding the continuous and discrete random fields with respect to
the same isonormal process gives
\begin{equation}
  \mathbb E
  \Norm{
    Y-Y_h
  }_{L^2(\Omega)}^2
  =
  \sum_{\boldsymbol n\in\mathbb N^d}
  e_{h,\boldsymbol n}^2,
  \qquad
  e_{h,\boldsymbol n}^2
  :=
  \Norm{
    \lambda_{\boldsymbol n}^{-1}\phi_{\boldsymbol n}
    -
    u_{h,\boldsymbol n}
  }_{L^2(\Omega)}^2.
  \label{eq:numerical_continuum_mse}
\end{equation}

We evaluate the low-frequency part of
\eqref{eq:numerical_continuum_mse} explicitly and estimate the
remaining infinite sum by importance sampling.  Write
\[
  m\qp{\boldsymbol n}
  :=
  \abs{\boldsymbol n}_\infty.
\]
For a hierarchy with mesh scales $h_1,\ldots,h_L$, the tail proposal
is the equal mixture
\[
  q\qp{\boldsymbol n}
  :=
  \frac1L
  \sum_{\ell=1}^L
  q_\ell\qp{\boldsymbol n},
  \qquad
  q_\ell\qp{\boldsymbol n}
  \propto
  \min\qc{
    h_\ell^4,
    m\qp{\boldsymbol n}^{-4}
  },
  \qquad
  m\qp{\boldsymbol n}>m_0.
\]
Each component resolves the transition
$m(\boldsymbol n)\simeq h_\ell^{-1}$ and retains an infinite
$m^{-4}$ tail.  For independent samples
$\boldsymbol N_q\sim q$, we use
\begin{equation}
  \widehat{\mathcal E}_h^2
  :=
  \sum_{
    m(\boldsymbol n)\leq m_0
  }
  e_{h,\boldsymbol n}^2
  +
  \frac1{N_{\mathrm{tail}}}
  \sum_{q=1}^{N_{\mathrm{tail}}}
  \frac{
    e_{h,\boldsymbol N_q}^2
  }{
    q\qp{\boldsymbol N_q}
  }.
  \label{eq:numerical_modal_estimator}
\end{equation}
The same sampled modes are used across every mesh level and mesh
family in a fixed dimension.

In two dimensions we take $m_0=3$ and
$N_{\mathrm{tail}}=32$, while in three dimensions we take $m_0=2$
and $N_{\mathrm{tail}}=12$.  Error bars show one bootstrap standard
error of the RMS estimator, obtained from $2000$ resamples of the
importance-sampled tail contributions with the explicitly summed
low-frequency contribution held fixed.

For the convergence plots we use $N_K^{-1/d}$ as the mesh scale, where
$N_K$ is the number of computational elements.  Under
Assumption~\ref{ass:poly_mesh}, $N_K^{-1/d}\simeq h$.

\subsection{Experiment 1: regular polytopic meshes}
\label{subsec:numerics_convergence_2d}

\subsubsection{Two dimensions}

For $d=2$, Theorem~\ref{thm:main_spde_convergence} gives the sharp
rate $O(h)$.  We consider four quasi-uniform mesh families: Voronoi,
wave-distorted Voronoi, checkerboard-distorted Voronoi and jittered
quadrilateral meshes.  Each hierarchy contains six refinement levels,
with $N_K=128^2$ elements on the finest level.

Figure~\ref{fig:random-fields-2d} shows representative stochastic
fields on the four mesh families.

\begin{figure}[htbp]
  \centering

  \includegraphics[width=0.48\textwidth]
    {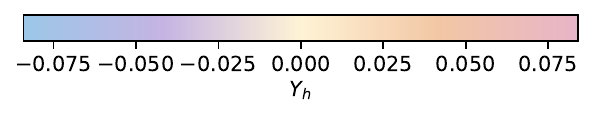}

  \medskip

  \begin{subfigure}[t]{0.24\textwidth}
    \centering
    \includegraphics[width=\linewidth]
      {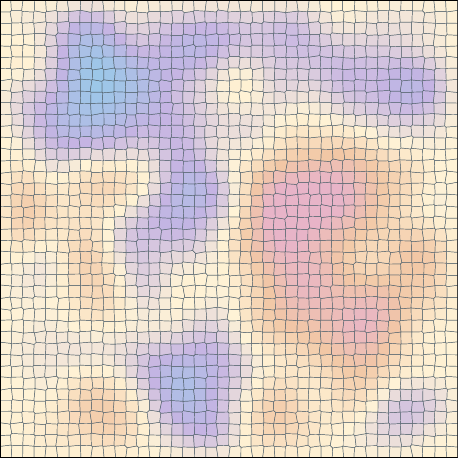}
    \caption{Voronoi}
  \end{subfigure}
  \hfill
  \begin{subfigure}[t]{0.24\textwidth}
    \centering
    \includegraphics[width=\linewidth]
      {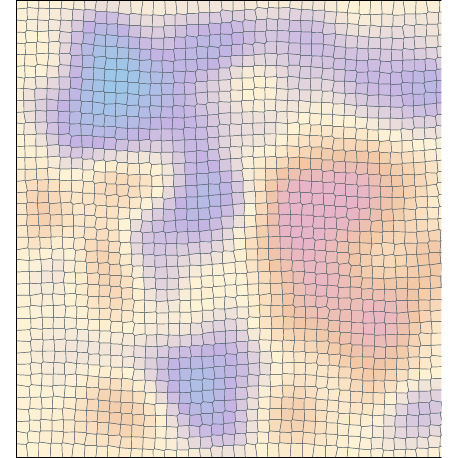}
    \caption{Wave Voronoi}
  \end{subfigure}
  \hfill
  \begin{subfigure}[t]{0.24\textwidth}
    \centering
    \includegraphics[width=\linewidth]
      {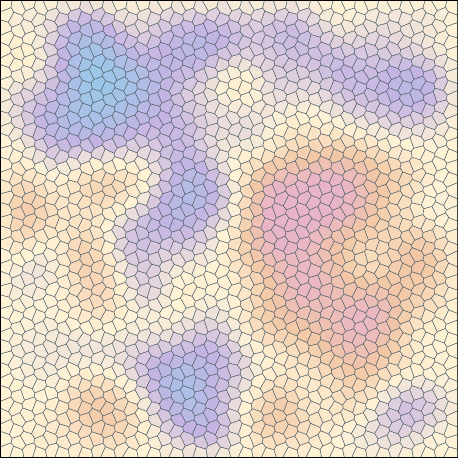}
    \caption{Checkerboard Voronoi}
  \end{subfigure}
  \hfill
  \begin{subfigure}[t]{0.24\textwidth}
    \centering
    \includegraphics[width=\linewidth]
      {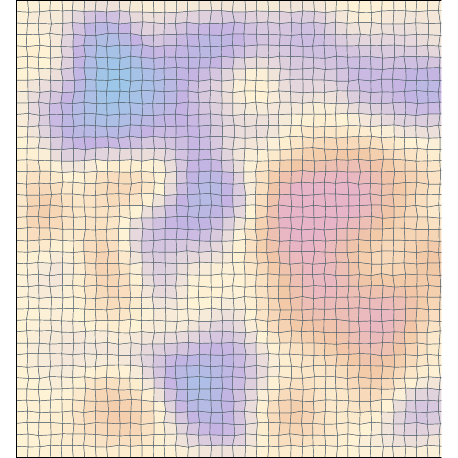}
    \caption{Jittered quadrilaterals}
  \end{subfigure}

  \caption{
    Representative two-dimensional stochastic fields on the four
    quasi-uniform polytopic mesh families.
  }
  \label{fig:random-fields-2d}
\end{figure}

Figure~\ref{fig:convergence-2d} shows the continuum RMS error
computed from \eqref{eq:numerical_modal_estimator}, plotted against
$N_K^{-1/2}$.  The dotted line shows the first-order reference slope.

\begin{figure}[htbp]
  \centering
  \includegraphics[width=0.5\textwidth]
    {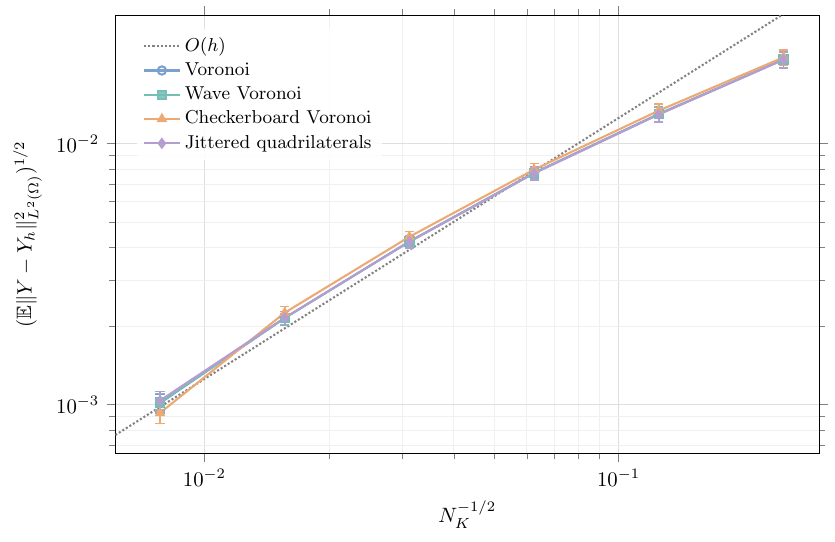}
  \caption{
    Continuum RMS $L^2$ error on the four two-dimensional polytopic
    mesh families.  The horizontal axis is $N_K^{-1/2}$ and the
    dotted line shows the sharp rate $O(h)$ from
    Theorem~\ref{thm:main_spde_convergence}.  Error bars show one
    bootstrap standard error of the modal estimator.
  }
  \label{fig:convergence-2d}
\end{figure}

\subsubsection{Three dimensions}
\label{subsec:numerics_convergence_3d}

For $d=3$, Theorem~\ref{thm:main_spde_convergence} gives the sharp
rate $O(h^{1/2})$.  We consider regular hexahedral, jittered
hexahedral and non-convex polyhedral mesh families.  Each hierarchy
contains five refinement levels.  The regular and jittered families
have $N_K=32^3$ elements on the finest level, while the non-convex
family is generated from a $48^3$ background partition.

The non-convex computational elements are connected agglomerates of
tetrahedra, with a single affine polynomial defined on each resulting
polyhedron.  Figure~\ref{fig:nonconvex-mesh} illustrates the
computational geometry.

\begin{figure}[htbp]
  \centering
  \begin{subfigure}[t]{0.5\textwidth}
    \centering
    \includegraphics[width=\linewidth]
      {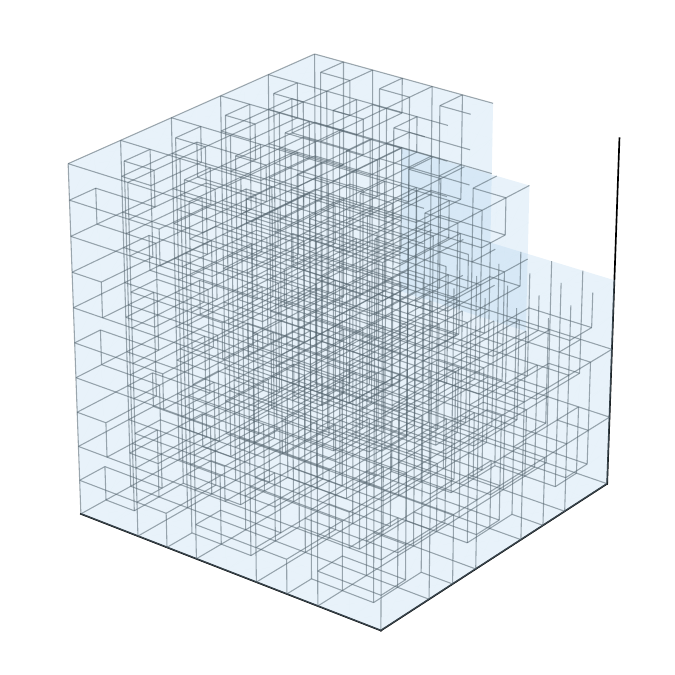}
    \caption{Cut-away polyhedral mesh}
  \end{subfigure}
  \hfill
  \begin{subfigure}[t]{0.35\textwidth}
    \centering
    \includegraphics[width=\linewidth]
      {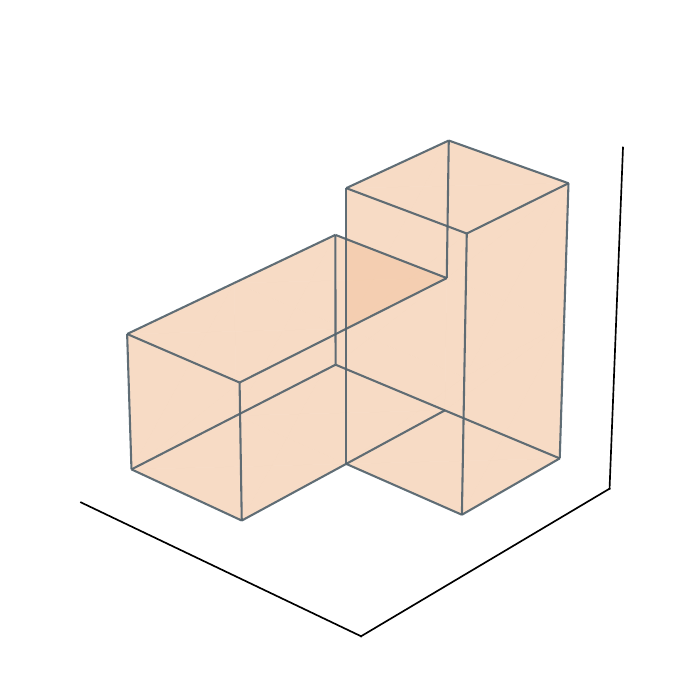}
    \caption{Representative non-convex cell}
  \end{subfigure}
  \caption{
    Non-convex polyhedral geometry used in the three-dimensional
    experiment.  The displayed boundaries are those of the
    computational polyhedra.
  }
  \label{fig:nonconvex-mesh}
\end{figure}

Figure~\ref{fig:convergence-3d} shows the continuum RMS error computed
from \eqref{eq:numerical_modal_estimator}, plotted against
$N_K^{-1/3}$.  The dotted line shows the $O(h^{1/2})$ reference
slope.

\begin{figure}[htbp]
  \centering
  \includegraphics[width=0.5\textwidth]
    {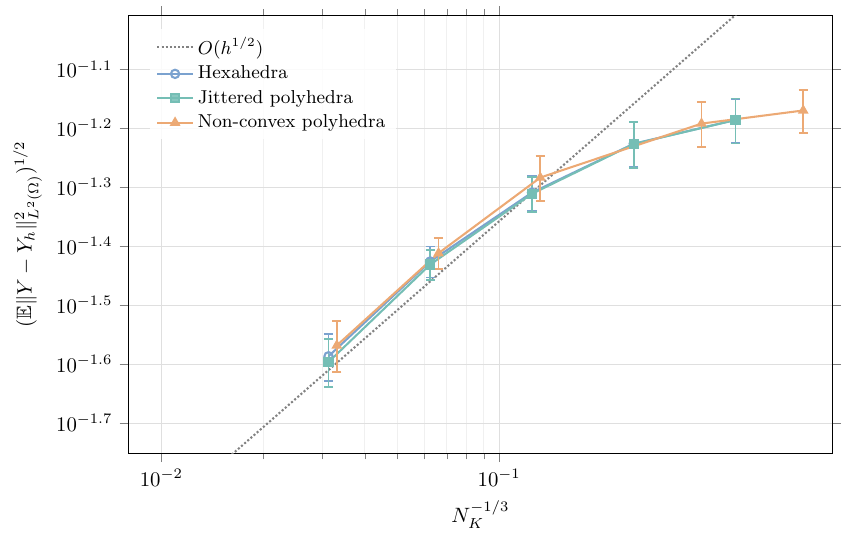}
  \caption{
    Continuum RMS $L^2$ error on the regular hexahedral, jittered
    hexahedral and non-convex polyhedral mesh families.  The
    horizontal axis is $N_K^{-1/3}$ and the dotted line shows the
    sharp rate $O(h^{1/2})$ from
    Theorem~\ref{thm:main_spde_convergence}.  Error bars show one
    bootstrap standard error of the modal estimator.
  }
  \label{fig:convergence-3d}
\end{figure}

\subsection{Experiment 2: stochastic reentrant problem}
\label{subsec:numerics_reentrant}

We consider a family of polygonal domains with
\begin{equation}
  \alpha
  \in
  \qc{
    45^\circ,
    60^\circ,
    75^\circ,
    90^\circ,
    105^\circ,
    120^\circ,
    135^\circ
  }.
  \label{eq:pacman_angles}
\end{equation}
The corresponding reentrant angle is
\begin{equation}
  \omega_\star
  =
  2\pi-\alpha,
  \label{eq:pacman_reentrant_angle}
\end{equation}
so $\pi/\omega_\star$ varies from $4/7$ to $4/5$ across the angle
sweep.

The reentrant vertex is preserved exactly at every refinement level.
The meshes are constructed from a perturbed Cartesian background, cut
to the polygonal domain and agglomerated into general polygons.  The
resulting hierarchies contain many-sided and non-convex elements.
Figure~\ref{fig:pacman-random-fields} shows representative stochastic
fields for four openings.

\begin{figure}[htbp]
  \centering

  \includegraphics[width=0.48\textwidth]
    {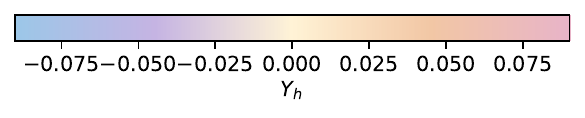}

  \medskip

  \begin{subfigure}[t]{0.24\textwidth}
    \centering
    \includegraphics[width=\linewidth]
      {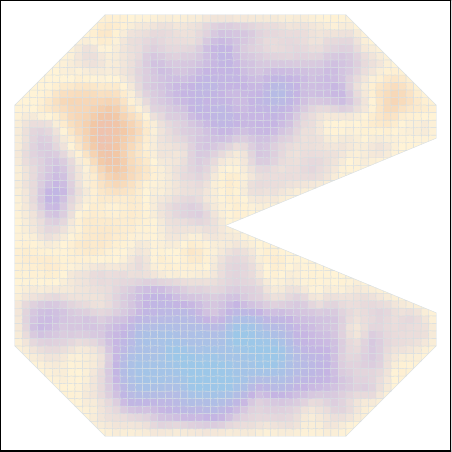}
    \caption{$\alpha=45^\circ$}
  \end{subfigure}
  \hfill
  \begin{subfigure}[t]{0.24\textwidth}
    \centering
    \includegraphics[width=\linewidth]
      {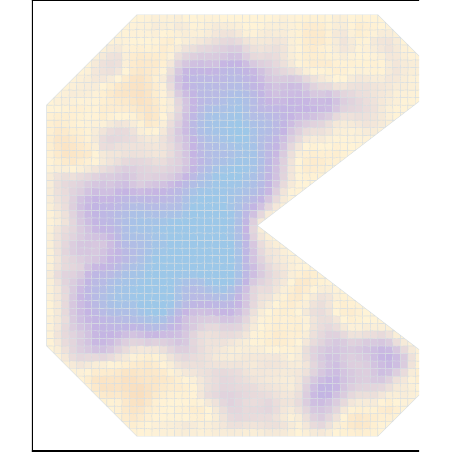}
    \caption{$\alpha=75^\circ$}
  \end{subfigure}
  \hfill
  \begin{subfigure}[t]{0.24\textwidth}
    \centering
    \includegraphics[width=\linewidth]
      {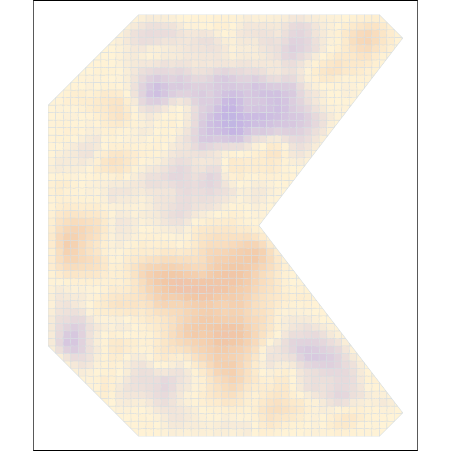}
    \caption{$\alpha=105^\circ$}
  \end{subfigure}
  \hfill
  \begin{subfigure}[t]{0.24\textwidth}
    \centering
    \includegraphics[width=\linewidth]
      {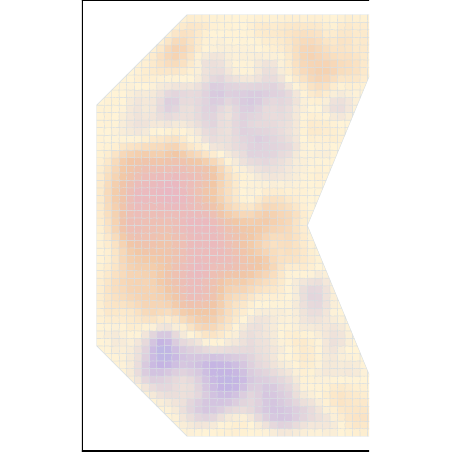}
    \caption{$\alpha=135^\circ$}
  \end{subfigure}

  \caption{
    Representative stochastic fields on the reentrant polygonal
    domains for four mouth openings spanning the angle sweep.
  }
  \label{fig:pacman-random-fields}
\end{figure}

For every mouth opening, the stochastic hierarchy contains five
reported levels followed by two additional refinement levels.  The
final level is generated from a $256\times256$ background partition
and is used as the coupled reference.  For each angle we use $M=24$
independent white-noise realisations.

For each realisation, the same continuum white-noise functional is
restricted to every discrete space in the hierarchy.  Thus the
solutions $Y_h^{(q)}$ and $Y_{h_{\mathrm{ref}}}^{(q)}$ are driven by
the same Gaussian realisation.  On each reported level we compute the
coupled fine-reference RMS difference
\begin{equation}
  \widehat E_{h,\mathrm{ref}}
  :=
  \left(
    \frac1M
    \sum_{q=1}^M
    \Norm{
      Y_h^{(q)}
      -
      Y_{h_{\mathrm{ref}}}^{(q)}
    }_{L^2(\Omega)}^2
  \right)^{1/2}.
  \label{eq:numerical_reference_estimator}
\end{equation}

For each angle, the stochastic convergence rate is estimated from a
least-squares fit of $\log\widehat E_{h,\mathrm{ref}}$ against $\log
N_K^{-1/2}$, omitting the coarsest reported level.  Its sampling
uncertainty is estimated by resampling the $M$ coupled realisations
simultaneously across all mesh levels and repeating the slope fit.  We
use $2000$ paired bootstrap resamples.

Figure~\ref{fig:pacman_stochastic} reports the resulting stochastic
convergence.  Panel~(A) shows the coupled fine-reference RMS
differences for representative $\alpha$, plotted against $N_K^{-1/2}$
together with a first-order reference slope.  Panel~(B) shows the
estimated exponent across the complete angle sweep.

\begin{figure}[htbp]
  \centering
  \begin{subfigure}[t]{0.49\textwidth}
    \centering
    \includegraphics[width=\linewidth]
      {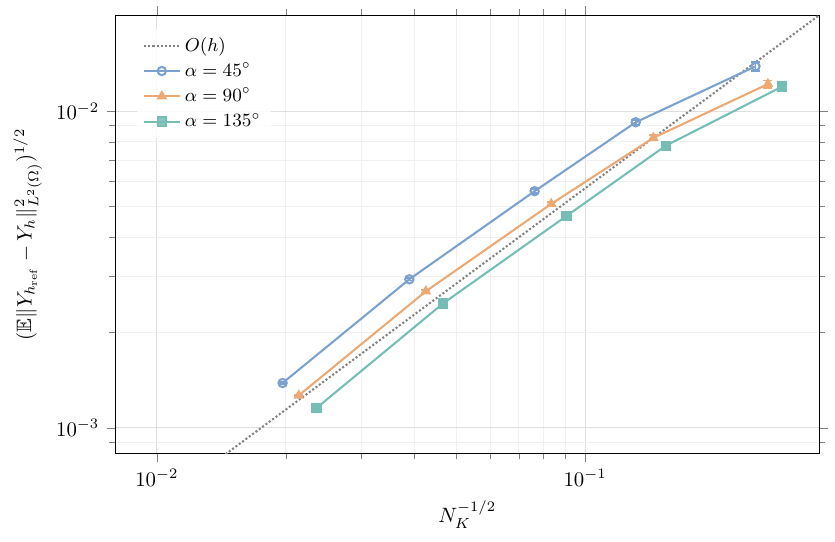}
    \caption{Representative convergence curves}
  \end{subfigure}
  \hfill
  \begin{subfigure}[t]{0.49\textwidth}
    \centering
    \includegraphics[width=\linewidth]
      {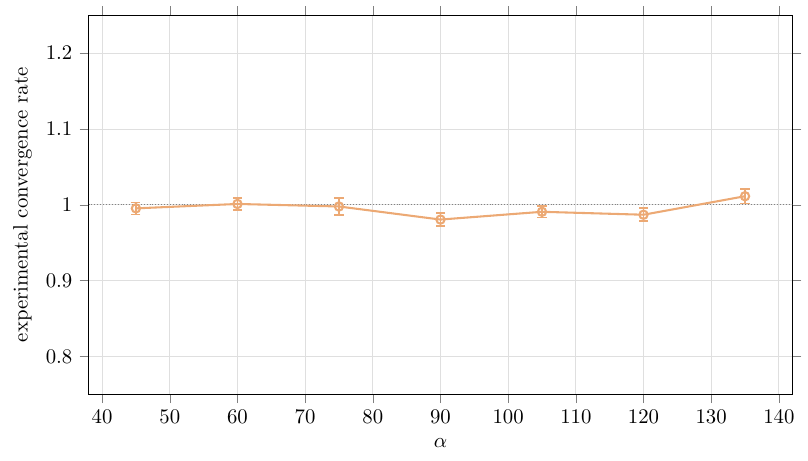}
    \caption{Fitted exponent over the angle sweep}
  \end{subfigure}
  \caption{
    Strong stochastic convergence on the reentrant polygonal domains.
    Panel~(A) shows the coupled fine-reference RMS $L^2$ difference
    against $N_K^{-1/2}$ for representative mouth openings; the
    dotted line shows the first-order rate.  Panel~(B) shows the
    fitted stochastic exponent for all seven mouth openings.  Error
    bars show bootstrap standard errors of the RMS differences in
    panel~(A) and of the fitted exponent in panel~(B).
  }
  \label{fig:pacman_stochastic}
\end{figure}

The experiments recover the two convergence behaviours established
in the analysis.  On regular polygonal and polyhedral domains, the
continuum estimator exhibits the dimension-dependent rate
$h^{2-d/2}$ across several polytopic mesh families.  Across the
reentrant angle sweep, the coupled stochastic approximation exhibits
the first-order rate of
Theorem~\ref{thm:reentrant_spde_convergence}.

\FloatBarrier

\section*{Acknowledgments}

TP is supported by the EPSRC programme grant Mathematics of Radiation
Transport (MaThRad) EP/W026899/2 which is gratefully acknowledged.

\printbibliography

\end{document}